\documentclass{article}   	
\usepackage{geometry}                		
\geometry{letterpaper}                   		
\usepackage{graphicx}				
\usepackage{amssymb, amsmath, amscd, amsthm}
\usepackage{caption}
\usepackage[usenames]{color}   
\usepackage{pgfplots}  
\definecolor{plum}{RGB}{102,0,204}

\newtheorem*{main}{The Main Theorem}
\newtheorem{prop}{Proposition}
\newtheorem{lemma}{Lemma}

\newcommand*\circled[1]{\tikz[baseline=(char.base)]{
            \node[shape=circle,draw,inner xsep=2pt, inner ysep=0pt,minimum size=2.2em,scale=.75] (char) {#1};}}
\newcommand*\block[1]{\tikz[baseline=(char.base)]{
            \node[shape=rectangle,draw,inner xsep=2pt, inner ysep=0pt] (char) {#1};}}
            
\def\N{{\mathbb N}}
\def\Z{{\mathbb Z}}
\def\md{{~(\operatorname{mod}~ 4)}}
\def\Id{{\operatorname{Id}}}

\title{How do you fix an oval track puzzle?\footnote{MSC: Primary 20B35, Secondary 00A08}}
\author{David A. Nash and Sara Randall} 
\date{}							

\begin{document}
\maketitle
\begin{abstract}
The oval track group, $OT_{n,k}$, is the subgroup of the symmetric group, $S_n$ generated by the basic moves in a generalized oval track puzzle with $n$ tiles and a turntable of size $k$.  In this paper we completely describe the oval track group for all possible $n$ and $k$ and use this information to answer the following question: If the tiles are removed from an oval track puzzle, how must they be returned in order to ensure that the puzzle is still solvable?  As part of this discussion we introduce the parity subgroup of $S_n$ in the case when $n$ is even.
\end{abstract}

\section{Introduction.}\label{sec:Intro}

The Top Spin puzzle was invented by Binary Arts (now Think Fun) in 1989.  The game consists of an oval track containing twenty tiles, numbered 1 through 20, and an intersecting turntable which holds 4 tiles at a time (see Figure~\ref{fig:TopSpin}).

\begin{figure}[h!]
\captionsetup[subfigure]{labelformat=simple}
\centering
\begin{tikzpicture}[scale=0.6]
    \draw[thick,fill,lightgray] (-3.25,0) circle (3);
    \draw[thick,fill,lightgray] (3.25,0) circle (3);
    \draw[thick,fill,lightgray] (-3.5,-3) -- (3.5,-3) -- (3.5,3) -- (-3.5,3) -- (-3.5,-3);
    \draw[thick,fill,lightgray] (0,2) circle (2.9);
    \draw[fill=plum] (0,2) circle (2.6);
    \draw (-0.1,1.35) -- (0.1,1.35) -- (0.1,-0.5) -- (-0.1,-0.5) -- (-0.1,1.35);
    \draw (-0.5,1.35) -- (-0.3,1.35) -- (-0.3,-0.45) -- (-0.5,-0.45) -- (-0.5,1.35);
    \draw (-0.9,1.35) -- (-0.7,1.35) -- (-0.7,-0.35) -- (-0.9,-0.35) -- (-0.9,1.35);
    \draw (-1.3,1.35) -- (-1.1,1.35) -- (-1.1,-0.15) -- (-1.3,-0.15) -- (-1.3,1.35);
    \draw (-1.7,1.35) -- (-1.5,1.35) -- (-1.5,0.15) -- (-1.7,0.15) -- (-1.7,1.35);
    \draw (-2.1,1.35) -- (-1.9,1.35) -- (-1.9,0.75) -- (-2.1,0.75) -- (-2.1,1.35);
    \draw (0.5,1.35) -- (0.3,1.35) -- (0.3,-0.45) -- (0.5,-0.45) -- (0.5,1.35);
    \draw (0.9,1.35) -- (0.7,1.35) -- (0.7,-0.35) -- (0.9,-0.35) -- (0.9,1.35);
    \draw (1.3,1.35) -- (1.1,1.35) -- (1.1,-0.15) -- (1.3,-0.15) -- (1.3,1.35);
    \draw (1.7,1.35) -- (1.5,1.35) -- (1.5,0.15) -- (1.7,0.15) -- (1.7,1.35);
    \draw (2.1,1.35) -- (1.9,1.35) -- (1.9,0.75) -- (2.1,0.75) -- (2.1,1.35);
    \draw (0,3.8) circle(.45);
    \draw[thick,fill,gray] (-3.25,2.7) arc (90:270:2.7) -- (-3.25,-1.45) arc (270:90:1.45) -- (-3.25,2.7);
    \draw[thick,fill,gray] (3.25,2.7) arc (90:-90:2.7) -- (3.25,-1.45) arc (-90:90:1.45) -- (3.25,2.7);
    \draw[thick,fill,gray] (-3.5,2.7) -- (3.5,2.7) -- (3.5,1.45) -- (-3.5,1.45) -- (-3.5,2.7);
    \draw[thick,fill,gray] (-3.5,-2.7) -- (3.5,-2.7) -- (3.5,-1.45) -- (-3.5,-1.45) -- (-3.5,-2.7);
    \draw[thick,fill=yellow] (-1.95,2.1) circle (0.6) node {\Large \bf 1};
    \draw[thick,fill=yellow] (-0.65,2.1) circle (0.6) node {\Large \bf 2};
    \draw[thick,fill=yellow] (0.65,2.1) circle (0.6) node {\Large \bf 3};
    \draw[thick,fill=yellow] (1.95,2.1) circle (0.6) node {\Large \bf 4};
    \draw[thick,fill=yellow] (3.25,2.1) circle (0.6) node {\Large \bf 5};
    \draw[thick,fill=yellow] (3.25,0) ++(54:2.1) circle (0.6) node {\Large \bf 6};
    \draw[thick,fill=yellow] (3.25,0) ++(18:2.1) circle (0.6) node {\Large \bf 7};
    \draw[thick,fill=yellow] (3.25,0) ++(-18:2.1) circle (0.6) node {\Large \bf 8};
    \draw[thick,fill=yellow] (3.25,0) ++(-54:2.1) circle (0.6) node {\Large \bf 9};
    \draw[thick,fill=yellow] (3.25,-2.1) circle (0.6) node {\Large \bf 10};
    \draw[thick,fill=yellow] (1.95,-2.1) circle (0.6) node {\Large \bf 11};
    \draw[thick,fill=yellow] (0.65,-2.1) circle (0.6) node {\Large \bf 12};
    \draw[thick,fill=yellow] (-0.65,-2.1) circle (0.6) node {\Large \bf 13};
    \draw[thick,fill=yellow] (-1.95,-2.1) circle (0.6) node {\Large \bf 14};
    \draw[thick,fill=yellow] (-3.25,-2.1) circle (0.6) node {\Large \bf 15};
    \draw[thick,fill=yellow] (-3.25,0) ++(234:2.1) circle (0.6) node {\Large \bf 16};
    \draw[thick,fill=yellow] (-3.25,0) ++(198:2.1) circle (0.6) node {\Large \bf 17};
    \draw[thick,fill=yellow] (-3.25,0) ++(162:2.1) circle (0.6) node {\Large \bf 18};
    \draw[thick,fill=yellow] (-3.25,0) ++(126:2.1) circle (0.6) node {\Large \bf 19};
    \draw[thick,fill=yellow] (-3.25,2.1) circle (0.6) node {\Large \bf 20};
\end{tikzpicture}
\caption{The (solved) Top Spin puzzle.}
\label{fig:TopSpin}
\end{figure}
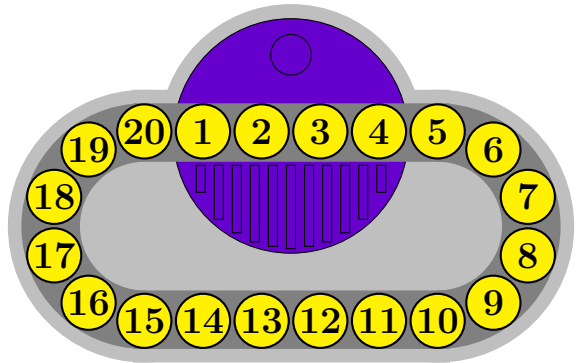

You can push the tiles around the track and the turntable allows you to flip 4 tiles at a time.  Starting with the numbers mixed up, your goal is to put them back in increasing order as you move clockwise around the track.  If we fix the left side of the turntable as the first position, then puzzle arrangements naturally correspond to permutations of the numbers $\{1, 2, \dots, 20\}$.  Thus, we can think of puzzle arrangements as sitting inside of the symmetric group $S_{20}$.

Of course, we can generalize the puzzle as well by considering the set of tiles $\{1, \dots, n\}$ for any natural number $n \in \N$ and by allowing for a turntable of size $k$ for any natural number $1 \leq k \leq n$.  These more general puzzles are often referred to as oval track puzzles and, just as in the Top Spin case, we can view puzzle arrangements as elements of the symmetric group $S_n$.

Now, imagine that you own one of these oval track puzzles and that you loan it out to a friend one day.  While attempting to solve the puzzle your friend becomes frustrated and smashes the puzzle on the table, thereby knocking out all of the tiles.  For a brief moment you consider yelling (to get your own frustration out), but your innate curiosity gets the better of you.  Your friend has inadvertently brought a fascinating conundrum to your attention.  You'd like to put your puzzle back together, but you want it to still be solvable (otherwise it will forever be a source of \emph{only} frustration).  Does it matter how you put the tiles back in?

As it turns out, if the puzzle you own is of the standard Top Spin variety, then the answer is no.  No matter how you put the tiles back in, it will always be solvable.  However, for some of the more general puzzles, it does depend on how you replace the tiles.  In the rest of the paper we answer this question in general by describing how the tiles must be replaced in order to ``fix'' a broken puzzle in each case.  Solving the puzzle corresponds to obtaining the identity permutation in $S_n$.  Thus, we determine which puzzle arrangements are solvable by considering the two moves in the oval track puzzle -- the \emph{translation} of the $n$ tiles around the track and the \emph{flip} of $k$ tiles using the turntable -- as generators of a subgroup of $S_n$.  For $n,k \in \N$ with $1 \leq k \leq n$, we will refer to the subgroup obtained as the \emph{oval track group}, $OT_{n,k}$.

\section{Preliminaries.}\label{sec:Prelims}
We start with a couple of important results known about the symmetric group which we plan to use, but we highly encourage any reader who is unfamiliar with the symmetric group to learn more about this beautiful group (see nearly any undergraduate text on abstract algebra, e.g.\ \cite{AH} or \cite{Nash}).  The first result is a well-known (see e.g.\ \cite[Lemma 2.7]{Rotman}) and extremely useful fact which describes what happens when you conjugate by permutations in the symmetric group.
\begin{lemma}\label{conjugate} Given $\theta, \sigma \in S_n$, then $\theta \sigma \theta^{-1}$ is the permutation obtained from $\sigma$ by applying $\theta$ to every number that appears in the disjoint cycle representation of $\sigma$.
\end{lemma}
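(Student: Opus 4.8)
The plan is to reduce the statement to the case of a single cycle and then verify that case by a direct pointwise computation. The key structural observation is that conjugation distributes over products: for any $\alpha, \beta \in S_n$ we have $\theta(\alpha\beta)\theta^{-1} = (\theta\alpha\theta^{-1})(\theta\beta\theta^{-1})$, since the inner $\theta^{-1}\theta$ cancels. Writing $\sigma$ in disjoint cycle form $\sigma = c_1 c_2 \cdots c_m$, this lets me conclude that $\theta\sigma\theta^{-1} = (\theta c_1 \theta^{-1})(\theta c_2 \theta^{-1}) \cdots (\theta c_m \theta^{-1})$, so it suffices to understand $\theta c \theta^{-1}$ for a single cycle $c$.

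First I would treat a single $r$-cycle $c = (a_1\, a_2\, \cdots\, a_r)$ and claim that $\theta c \theta^{-1} = (\theta(a_1)\, \theta(a_2)\, \cdots\, \theta(a_r))$. To prove this I would simply check where the permutation $\theta c \theta^{-1}$ sends each element of $\{1,\dots,n\}$. For an element of the form $\theta(a_i)$, applying $\theta^{-1}$ returns $a_i$, then $c$ advances it to $a_{i+1}$ (with indices read cyclically modulo $r$), and finally $\theta$ sends it to $\theta(a_{i+1})$; this is precisely the action of the claimed cycle on $\theta(a_i)$. For any other element $b$, write $b = \theta(x)$ with $x \notin \{a_1, \dots, a_r\}$; then $c$ fixes $x$, so the three maps compose to the identity on $b$ and $b$ is fixed. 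Hence $\theta c \theta^{-1}$ and $(\theta(a_1)\,\cdots\,\theta(a_r))$ agree as functions and are equal.

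Combining the two steps expresses $\theta\sigma\theta^{-1}$ as the product of the cycles obtained by applying $\theta$ entrywise to each $c_j$, which is exactly the asserted description. The one remaining thing I would verify is that this product is genuinely a \emph{disjoint} cycle representation: because the supports of the $c_j$ are pairwise disjoint and $\theta$ is a bijection, their images under $\theta$ remain pairwise disjoint, so the resulting cycles are disjoint and the description is well formed.

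I do not expect a serious obstacle here; the only points demanding care are the cyclic index arithmetic in the single-cycle computation and the brief bookkeeping confirming that disjointness is preserved. The entire conceptual content rests on the distributivity of conjugation over products together with the pointwise check for one cycle.
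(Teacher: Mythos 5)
Your proof is correct and complete: the reduction via $\theta(\alpha\beta)\theta^{-1} = (\theta\alpha\theta^{-1})(\theta\beta\theta^{-1})$, the pointwise verification on a single cycle (including the case of points outside the support), and the closing observation that $\theta$, being a bijection, preserves disjointness of the cycles' supports together constitute the standard argument. Note that the paper itself does not prove this lemma --- it cites it as well known (Rotman, Lemma 2.7) and illustrates it with an example --- so there is no in-paper proof to diverge from; your write-up is exactly the textbook proof the citation points to, and nothing is missing.
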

 We will make use of conjugation several times throughout the paper so we give a quick example of Lemma~\ref{conjugate} in action.  Let $\theta = (1~2~3~4)$ and $\sigma=(1~3~5)(2~6)$ be permutations in $S_6$, then  
$$\theta \sigma \theta^{-1} = (\theta(1)~\theta(3)~\theta(5))(\theta(2)~\theta(6)) = (2~4~5)(3~6).$$

Since we aim to describe the group that our two basic moves generate, it will be helpful to know quickly when we generate the alternating group in various contexts.  More specifically, we will use the fact that (for $n \geq 3$) the elements of $A_n$ can be generated by a single consecutive 3-cycle together with the $n$-cycle $(1~2~\dots~n)$.

\begin{prop}\label{gensets}
 If $n \geq 3$ and $(i~i+1~i+2)$ is any consecutive 3-cycle, then $A_n$ is a subgroup of the group generated by $(i~i+1~i+2)$ and the $n$-cycle $(1~ 2~ \dots~ n)$.
\end{prop}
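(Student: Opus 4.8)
The plan is to reduce the statement to the classical fact that the alternating group is generated by consecutive $3$-cycles, and to produce all of those consecutive $3$-cycles from the two given generators by conjugating with the $n$-cycle. Write $c = (1~2~\dots~n)$. The engine of the argument is Lemma~\ref{conjugate}: conjugating a permutation by $c$ simply adds $1$ (modulo $n$) to every entry appearing in its disjoint cycle representation.

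First I would apply this repeatedly to the given $3$-cycle. Conjugating $(i~i+1~i+2)$ by $c^t$ yields $(i+t~i+1+t~i+2+t)$, with all entries reduced to $\{1, \dots, n\}$. Letting $t$ range over $0, 1, \dots, n-1$ shows that the group $G = \langle (i~i+1~i+2),\, c \rangle$ contains every ``cyclically consecutive'' $3$-cycle, and in particular the genuine consecutive $3$-cycles $(1~2~3), (2~3~4), \dots, (n-2~n-1~n)$ (those arising when no entry wraps past $n$). This step is essentially mechanical once the conjugation lemma is in hand.

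It then remains to prove that these $n-2$ consecutive $3$-cycles generate $A_n$; this is the substantive part. I would argue by induction on $n$. The base case $n = 3$ is immediate since $\langle (1~2~3) \rangle = A_3$. For the inductive step, let $H = \langle (1~2~3), \dots, (n-2~n-1~n) \rangle$ and assume that the first $n-3$ of these generators, namely $(1~2~3), \dots, (n-3~n-2~n-1)$, already generate the copy of $A_{n-1}$ fixing the symbol $n$. Then $H$ contains $A_{n-1}$, which is exactly the stabilizer of $n$ in $A_n$, while the extra generator $(n-2~n-1~n)$ sends $n \mapsto n-2$, so $H$ acts transitively on $\{1, \dots, n\}$. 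By the orbit--stabilizer theorem, $|H| = n \cdot |A_{n-1}| = |A_n|$, forcing $H = A_n$ and completing the induction.

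The main obstacle is this generation step rather than the conjugation bookkeeping: one must know (or reprove) that $A_n$ is built from $3$-cycles and then organize the passage to consecutive ones cleanly. The orbit--stabilizer induction sidesteps the need to write an arbitrary $3$-cycle explicitly as a product of consecutive ones, which is exactly where a direct computational approach would bog down. Finally, I would remark that we only assert $A_n \subseteq G$; when $n$ is even, $c$ is an odd permutation, so $G$ is strictly larger than $A_n$, but this causes no trouble for the stated containment.
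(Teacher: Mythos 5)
Your proposal is correct, and its first half --- conjugating the given 3-cycle by powers of the $n$-cycle, via Lemma~\ref{conjugate}, to harvest all of the consecutive 3-cycles $(1~2~3), \dots, (n-2~n-1~n)$ --- is exactly the paper's argument. Where you genuinely diverge is in the second half: the paper simply cites the fact that the consecutive 3-cycles generate $A_n$ (\cite[Lemma 2]{Archer}), whereas you reprove it by induction with an orbit--stabilizer count. Your induction is sound: the first $n-3$ generators give the copy of $A_{n-1}$ stabilizing $n$ by the inductive hypothesis, the extra generator $(n-2~n-1~n)$ moves $n$ into $\{1,\dots,n-1\}$, on which $A_{n-1}$ acts transitively (using $n-1 \geq 3$, which holds in the inductive step), so $H$ is transitive and the count forces $H = A_n$. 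One point you should make explicit: $H \leq A_n$ because its generators are 3-cycles, hence even permutations; this containment is what upgrades the orbit--stabilizer lower bound $|H| \geq n\,|A_{n-1}| = |A_n|$ to equality, and it is also what justifies $\mathrm{Stab}_H(n) = A_{n-1}$ rather than merely $\mathrm{Stab}_H(n) \supseteq A_{n-1}$. As for the trade-off: the paper's route is shorter but outsources the substantive step to the literature, while your count makes the proposition self-contained and neatly sidesteps the explicit, somewhat fiddly decomposition of an arbitrary 3-cycle into consecutive ones that the standard cited proof carries out. Your closing remark that $G = \langle (i~i+1~i+2), c \rangle$ can strictly exceed $A_n$ when $n$ is even is apt --- the proposition claims only the containment $A_n \leq G$, and indeed the paper exploits exactly this distinction later when determining whether $OT_{n,k}$ is $A_n$ or $S_n$.
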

\begin{proof}
It is well-known (see e.g.\ \cite[Lemma 2]{Archer}) that the set of consecutive 3-cycles will generate all of $A_n$.  The result therefore follows as we may generate all of the consecutive 3-cycles using conjugation by the $n$-cycle.
\end{proof}


\subsection{The parity subgroup.}\label{subsec:Parity}
Observe that for any $n \in \N$, we can consider permutations in $S_n$ which maintain parity, i.e.\ ones which permute the odd numbers amongst themselves and simultaneously permute the even numbers amongst themselves.  We'll refer to these as \emph{Type I} permutations.  For example, our permutation $\sigma=(1~3~5)(2~6)$ from earlier is Type I.  If we isolate the odd portion of a Type I element, then we may think of it as a permutation in the symmetric group on the first $\lceil \frac{n}{2} \rceil$ odd numbers, which we denote by $S^{^\text{odd}}_{\lceil \frac{n}{2} \rceil}$.  Similarly, the even portion will be a permutation of the first $\lfloor \frac{n}{2}\rfloor$ even numbers, $S^{^\text{even}}_{\lfloor \frac{n}{2} \rfloor}$.  Thus, the Type I elements in $S_n$ exactly correspond to the elements in $S^{^\text{odd}}_{\lceil \frac{n}{2} \rceil} \times S^{^\text{even}}_{\lfloor \frac{n}{2} \rfloor}$ and it follows that there are $\left(\lceil \frac{n}{2} \rceil\right)!\left(\lfloor \frac{n}{2} \rfloor\right)!$ of these permutations.

In the special case when $n$ is even, it is also possible to have permutations which exactly swap the parity, i.e.\ ones which send all odd numbers to even numbers and vice versa.  We'll refer to these as \emph{Type II} permutations.  For example, the permutation $\theta=(1~2~3~4)$ is a Type II permutation in $S_4$, however it is \textbf{not} Type II in $S_6$ since in that group $\theta$ fixes 5 and 6 rather than changing their parity.  Observe that  there are exactly $\left(\frac{n}{2}\right)!\left(\frac{n}{2}\right)!$ different Type II permutations in $S_n$ as well.

When taken together, the union of Type I and Type II permutations, denoted by $PS_n$, forms what we call the \emph{parity subgroup} of $S_n$.  Thus, for any even $n \in \N$, we have $PS_n = \{\theta \in S_n \mid \theta \text{~is Type I or Type II}\}$.  Our observations above imply that $|PS_n| = 2\left(\frac{n}{2}\right)!\left(\frac{n}{2}\right)!$.


\subsection{Getting familiar with the oval track group.}\label{subsec:OvalTrack}
While we will make use of tools and notation from the symmetric group, it will also be helpful to think about the oval track group from the perspective of puzzle arrangements.  From that point of view we think of applying group elements as moves on the puzzle.  Towards that end, we will represent  arrangements as diagrams.  For example, the solved arrangement (or $(1) \in S_n$) is
$$\left(\circled{1} \dots \circled{k}\right) \circled{k+1} \dots \circled{n},$$
where the parentheses mark the location of the turntable.

Recall that the two basic moves that we have at our disposal are the \emph{translation}, which we will denote by $\tau$, and the \emph{flip}, which we will denote by $\phi$.  The translation rotates all tiles one position clockwise around the track, hence diagramatically:
$$\tau \left(\circled{1} \dots \circled{k}\right) \circled{k+1} \dots \circled{n} = \left(\circled{n}~\circled{1} \dots \circled{k-1}\right) \circled{k} \dots \circled{n-1}.$$
Observe that $n$ has moved to the first position, while each number $1 \leq i < n$ has moved to position $i+1$, thus, in cycle notation $\tau = (1~2~\dots~n)$.  Recall that since $\tau$ is an $n$-cycle, it can be written as a product of $n-1$ transpositions.  Hence, $\tau \in A_n$ when $n$ is odd, and $\tau \not \in A_n$ when $n$ is even.  Notice also that with $n$ tiles, if we continually translate in the same direction $n$ times we should be back to our starting position.  Hence $\tau^n=(1)$.

Meanwhile, the flip serves to reverse all of the numbers in the turntable, hence
$$\phi \left(\circled{1} \dots \circled{k}\right) \circled{k+1} \dots \circled{n} = \left(\circled{k} \dots \circled{1}\right) \circled{k+1} \dots \circled{n}.$$
Observe that this simultaneously swaps $\lfloor \frac{k}{2} \rfloor$ pairs of tiles, 1 and $k$, 2 and $k-1$, 3 and $k-2$ and so on.  Thus, when $k$ is even, we have $\phi=(1~k)(2~k-1)\dots(\frac{k}{2}~\frac{k}{2}+1)$ and when $k$ is odd, we have $\phi=(1~k)(2~k-1)\dots(\frac{k-1}{2}~\frac{k+3}{2})$, as $\frac{k+1}{2}$ will be fixed in the middle of the turntable.  Moreover, notice that the number of transpositions, $\lfloor \frac{k}{2} \rfloor$, is even -- meaning $\phi \in A_n$ -- when $k \equiv 0,1 \md$ and is odd -- meaning $\phi \not \in A_n$ -- when $k \equiv 2,3 \md$.  Certainly, if we were to immediately flip the turntable again we would be back to where we started.  Hence $\phi^2=(1)$.

With our two basic moves defined, we define the oval track group $OT_{n,k}$ for any $n \in \N$ and any integer $1 \leq k \leq n$ as the subgroup of $S_n$ generated by $\tau$ and $\phi$.  Hence, $OT_{n,k} = \langle \tau, \phi \rangle \leq S_n$ is the group we would like to describe in general.

\section{Degenerate cases.}
To start, we deal with some degenerate boundary cases.  We think of these as cases where either there is something broken about the puzzle or there isn't enough room for the full structure of $OT_{n,k}$ to be realized.  

\begin{prop}[Degenerate Cases]\label{degenerate}~\\ \vspace{-0.5cm}
\begin{itemize}
\item[(i)] $OT_{n,1} \cong \Z_n$,
\item[(ii)] $OT_{2,2} \cong S_2$,
\item[(iii)] For $n \geq 3$, $OT_{n,n} \cong OT_{n,n-1} \cong D_n$, the symmetries of a regular $n$-gon.
\end{itemize}
\end{prop}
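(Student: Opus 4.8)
The plan is to treat the three parts separately, with (i) and (ii) being quick observations and (iii) carrying essentially all of the content. For (i), when $k=1$ the turntable holds a single tile, so reversing it does nothing and $\phi = (1)$. Hence $OT_{n,1} = \langle \tau \rangle$, which is cyclic of order $n$ because $\tau$ is an $n$-cycle; thus $OT_{n,1} \cong \Z_n$. For (ii) I would simply observe that with $n = k = 2$ both generators coincide: $\tau = (1~2) = \phi$, so $OT_{2,2} = \langle (1~2)\rangle \cong S_2$.

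For part (iii) the strategy is to exhibit the standard dihedral relations and then count. Recall that $D_n$ admits the presentation $\langle r, s \mid r^n = s^2 = (1),\ srs^{-1} = r^{-1}\rangle$ and has order $2n$. We already know that $\tau^n = (1)$ and $\phi^2 = (1)$, so the only remaining relation to verify is $\phi \tau \phi^{-1} = \tau^{-1}$ (noting $\phi^{-1} = \phi$). In both cases I would first record $\phi$ as an explicit function: for $k = n$ one has $\phi(i) = n+1-i$ for all $i$, while for $k = n-1$ one has $\phi(i) = n-i$ for $1 \le i \le n-1$ and $\phi(n) = n$. These uniform formulas automatically encode the fixed point(s) in the odd-$k$ subcases. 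Then, applying Lemma~\ref{conjugate}, the conjugate $\phi\tau\phi^{-1}$ is obtained by applying $\phi$ to each entry of $\tau = (1~2~\dots~n)$, and a direct check shows the resulting cycle is exactly $\tau^{-1}$ in each case. This is the computational heart of the argument.

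Having the relations in hand shows that $OT_{n,k}$ is a homomorphic image of $D_n$, so $|OT_{n,k}| \le 2n$; it then remains to show the order is exactly $2n$. I would do this by arguing that $\phi \notin \langle \tau \rangle$: every power of $\tau$ commutes with $\tau$, whereas $\phi \tau \phi^{-1} = \tau^{-1} \neq \tau$ since $\tau$ is an $n$-cycle with $n \geq 3$, so $\tau \neq \tau^{-1}$. Hence $\phi$ lies outside the order-$n$ subgroup $\langle \tau \rangle$, the coset decomposition $\langle \tau \rangle \cup \langle \tau \rangle \phi$ forces $|OT_{n,k}| = 2n$, and therefore $OT_{n,k} \cong D_n$ for both $k = n$ and $k = n-1$.

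The main obstacle I anticipate is pure bookkeeping: getting the two explicit descriptions of $\phi$ correct and then verifying that applying $\phi$ to the entries of the $n$-cycle really does reverse it in both the $k=n$ and $k=n-1$ regimes. Once the conjugation relation is confirmed, the structural conclusion is forced by the order count, and the hypothesis $n \geq 3$ enters exactly where it is needed — to guarantee $\tau \neq \tau^{-1}$ so that $\phi$ escapes $\langle \tau \rangle$.
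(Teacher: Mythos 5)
Your proof is correct and follows the paper's approach: parts (i) and (ii) match the paper verbatim in substance, and for (iii) you verify the dihedral relations $\tau^n=(1)$, $\phi^2=(1)$, and $\phi\tau\phi^{-1}=\tau^{-1}$ by applying Lemma~\ref{conjugate} to the explicit formulas for $\phi$ in the $k=n$ and $k=n-1$ cases, exactly as the paper does. If anything, you are slightly more complete than the paper: checking the relations alone only exhibits $OT_{n,k}$ as a homomorphic image of $D_n$, and your closing order count --- $\phi\notin\langle\tau\rangle$ since $\phi\tau\phi^{-1}=\tau^{-1}\neq\tau$ when $n\geq 3$, so the cosets $\langle\tau\rangle$ and $\langle\tau\rangle\phi$ force $|OT_{n,k}|=2n$ --- supplies the step the paper leaves implicit in its claim that the groups have ``the same presentation.''
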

\begin{proof}
For (i), we have $k=1$ and thus, our turntable does not actually accomplish anything, i.e.\ $\phi=(1)$.  Because of this, we are left with a single generator $\tau$ of order $n$.  For (ii), $\phi = \tau = (1~2)$ which generates $S_2$.  For (iii), we will show that the oval track group has the same presentation as $D_n$ in terms of generators and relations.  
Recall that the dihedral group $D_n$ can be presented as the group generated by a reflection $h$ and a rotation $t$ subject to the relations $h^2=\Id$, $t^n=\Id$, and $hth=t^{-1}$.  We have already seen for the oval track group that $\phi^2=(1)$ and $\tau^n=(1)$, so we need only to show that $\phi \tau \phi = \tau^{-1}$ to complete the proof.  Since $\phi=\phi^{-1}$, the left side is conjugation of $\tau$ by $\phi$.  Thus, by Lemma~\ref{conjugate}, when $k=n$ we have
$$\phi \tau \phi^{-1} = (\phi(1)~\phi(2)~\dots~\phi(n)) = (n~n-1~\dots~1) = \tau^{-1},$$
and similarly, when $k=n-1$ we have
$$\phi \tau \phi^{-1} = (\phi(1)~\phi(2)~\dots~\phi(n-1)~\phi(n)) = (n-1~n-2~\dots~1~n) = \tau^{-1},$$
since $\phi$ fixes $n$ which is outside the turntable.
\end{proof}

In terms of fixing broken puzzles, this means that if you have a puzzle with $k=1$, it can only be fixed by returning the tiles in consecutive order clockwise around the track with any tile in first position (or, equivalently, starting with 1 in any position).  In the case when $n=k=2$, the tiles can be replaced in either of the two possible ways.  And when $n \geq 3$ and $k=n$ or $n-1$, we can imagine using the tiles as labels for the vertices of a regular $n$-gon.  Just as every symmetry in $D_n$ can be completely described by giving the location of a particular label and an orientation of the labels, so too can we describe the possible ways of fixing the puzzle in this fashion.  We can place any tile we choose into any position after which they must continue consecutively either clockwise or counterclockwise around the track.

\section{Nice moves, permutations, and subgroups.}
Observe that our degenerate cases serve to cover all cases with $1 \leq n \leq 3$ and also any case with $k=1$, or with $k=n$ or $n-1$.  Thus, as we move to the more general situation, we will assume for the rest of the paper that $n \geq 4$ and that $1<k<n-1$.  We would also like to note that some of the techniques that follow are due to Kaufmann and Kavountzis (see \cite{KK} and \cite{Kaufmann}), however we found several errors in their work and thus we have chosen to reorganize and present our own arguments where our work overlaps theirs.

To give a general outline of what follows: First we will define some helpful puzzle moves which we plan to use heavily.  Second we will use those moves to generate useful permutations, such as consecutive 3-cycles.  And third we will use those permutations together with the structure of $\phi$ and $\tau$ to find ``subgroup bounds'' on $OT_{n,k}$ inside $S_n$.  As we saw earlier
, features of our generators depend on the parity of $n$ and on $k$ modulo 4, thus the structures of our nice moves and the work that follows will require us to split into various subcases.

\subsection{Flip-translations and shuffles.}
We will call $\rho = \phi \tau$ a \emph{flip-translation}.  The purpose of a flip-translation is essentially to move tiles from one side of the turntable to the other.  More specifically, if we imagine putting the left-most $k-1$ tiles in the turntable into a block together, then $\rho$ has the effect of moving the first tile to the left of the turntable to the right-side of the turntable while simultaneously reversing the order of the tiles in our block:
$$\rho ~~ \left(\block{\circled{1} \dots \circled{k-1}}~ \circled{k}\right) \circled{k+1} \dots \circled{n} = \left(\block{\circled{k-1} \dots \circled{1}}~ \circled{n}\right) \circled{k} \dots \circled{n-1}$$
Similarly, $\rho^{-1}$ allows us to move tiles in the other direction
and certainly if we apply an even number of flip-translations (or inverse flip-translations), then the block will be back into its usual (clockwise) order.

Another interesting and useful move is $\pi=\tau\phi\tau^{-1}\phi = \tau^{2}\rho^{-2}$ which we will refer to as a \emph{shuffle}.  Observe in general (recall $n\geq k+2$) that $\pi$ acts as follows:
$$
\pi \left(\circled{1} \dots \circled{k}\right) \circled{k+1} \dots \circled{n} = \left(\circled{k}~\circled{k+1}~\circled{1} \dots \circled{k-2}\right) \circled{k-1}~ \circled{k+2} \dots \circled{n}.
$$
Hence, when $k$ is even, this element is the $(k+1)$-cycle $(1~3~5~\dots~k+1~2~4~\dots~k)$ that cycles through the odds first and then the evens.  When $k$ is odd instead, this element is the pair of disjoint $\frac{k+1}{2}$-cycles $(1~3~5~\dots~k)(2~4~\dots~k+1)$ where one cycles through the odds and the other cycles through the evens separately.  

From the definition of $\pi$, we can see that $\pi^{-1}$ will take the two tiles at the left of the turntable and will insert them after the first tile to the right of the turntable, leaving the 3rd tile in the turntable as the new first tile (see below):
$$\pi^{-1}\left(\circled{1} \dots \circled{k}\right) \circled{k+1} \dots \circled{n} = \left(\circled{3} \dots \circled{k+1}~\circled{1}\right) \circled{2}~\circled{k+2} \dots \circled{n}$$

\begin{lemma}\label{k-cycles} (a) If $n-k$ is even, then $\tau\rho^{n-k}$ is the consecutive $k$-cycle $(1~2~\dots~k)$.\\
(b) If $k$ is even, $\pi^{\frac{k}{2}}$ is the consecutive $(k+1)$-cycle $(k+1~k~\dots~1)$.
\end{lemma}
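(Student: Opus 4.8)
The plan is to treat each part by isolating a single permutation of known structure and reading off an explicit power of it, so that the only real work is bookkeeping with cycle indices.

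For part (a), I would first rewrite the flip-translation in a form that decouples the turntable from the outer track. Composing the explicit forms of $\phi$ and $\tau$ (equivalently, reading the effect straight off the flip-translation diagram), one finds that $\rho=\phi\tau$ factors as a product of two permutations with disjoint supports,
\[ \rho = \underbrace{(1~k-1)(2~k-2)\cdots}_{\text{reversal of }\{1,\dots,k-1\}}\cdot (k~k+1~\cdots~n), \]
where the first factor reverses $\{1,\dots,k-1\}$ (send $i\mapsto k-i$) and the second is the $(n-k+1)$-cycle $C=(k~k+1~\cdots~n)$. Call the reversal $\beta$. Since $\beta$ and $C$ act on disjoint sets they commute, and $\beta$ is an involution, so $\rho^{n-k}=\beta^{\,n-k}C^{\,n-k}$. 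Here the hypothesis that $n-k$ is even does the work: it forces $\beta^{\,n-k}=\Id$, leaving $\rho^{n-k}=C^{\,n-k}=C^{-1}$ because $C$ has order $n-k+1$. It then remains to multiply by $\tau$ and verify the bookkeeping: $\tau$ sends $i\mapsto i+1$, while $C^{-1}$ fixes $\{1,\dots,k-1\}$, sends $k\mapsto n$, and sends $j\mapsto j-1$ for $k<j\le n$. Composing these, every tile outside the turntable returns to itself, $k$ is sent to $1$, and $i\mapsto i+1$ on $\{1,\dots,k-1\}$, which is exactly the consecutive $k$-cycle $(1~2~\cdots~k)$.

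For part (b), I would use the cycle description of the shuffle already recorded above: when $k$ is even, $\pi$ is the single $(k+1)$-cycle $(1~3~5~\cdots~k+1~2~4~\cdots~k)$. The $\tfrac{k}{2}$-th power of a single cycle just advances every entry $\tfrac{k}{2}$ places along the cycle, so the whole problem reduces to locating each number in this cycle and shifting. Writing the cycle as $(b_0~b_1~\cdots~b_k)$, the odd number $2i+1$ occupies index $i$ (for $0\le i\le \tfrac{k}{2}$) and the even number $2i$ occupies index $\tfrac{k}{2}+i$ (for $1\le i\le \tfrac{k}{2}$). Advancing each index by $\tfrac{k}{2}$ modulo $k+1$ and translating back to values should give $2i+1\mapsto 2i$ for $i\ge 1$ and $2i\mapsto 2i-1$, which knit together into $j\mapsto j-1$ for all $2\le j\le k+1$, together with $1\mapsto k+1$ at the wrap. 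That is precisely the consecutive cycle $(k+1~k~\cdots~1)$.

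The composition in (a) is routine; the one place demanding care is the modular index bookkeeping in (b), where the wraparound at the junction between the odd block and the even block of the cycle must be handled so that the two separate cases fuse into the single formula $j\mapsto j-1$. I would therefore check the boundary indices explicitly — the entry $i=0$ giving $1\mapsto k+1$, and the first even value $2\mapsto 1$ — to confirm that the shifted cycle closes up correctly.
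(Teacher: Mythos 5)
Your proof is correct, and it takes a genuinely different route from the paper's. The paper proves both parts diagrammatically, by tracking tiles: for (a) it observes that each flip-translation moves one tile from the left of the turntable to the right, so $n-k$ applications transport $n, n-1, \dots, k+1$ across while the evenness of $n-k$ restores the block's order, and a final $\tau$ completes the picture; for (b) it notes that $\pi$ moves two tiles at a time, so $\pi^{k/2}$ moves $2,3,\dots,k+1$ in order to the left of $1$. You instead work with explicit permutation algebra. Your factorization $\rho = \beta\, C$ with $\beta\colon i \mapsto k-i$ on $\{1,\dots,k-1\}$ and $C=(k~k+1~\cdots~n)$ is correct (one can verify $\rho(x)=\phi(\tau(x))$ gives $x \mapsto k-x$ for $1\le x\le k-1$, $k\mapsto k+1$, $x\mapsto x+1$ for $k<x<n$, and $n\mapsto k$, matching the paper's convention where $\tau$ acts first), and it makes the role of the hypothesis structurally transparent: the parity of $n-k$ is used exactly once, to kill the involution $\beta$, while $C^{n-k}=C^{-1}$ falls out of $C$ having order $n-k+1$; the final composition $\tau C^{-1}=(1~2~\cdots~k)$ checks out on all three ranges. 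Your index arithmetic in (b) is also correct: with $b_i = 2i+1$ for $0\le i\le \frac{k}{2}$ and $b_{k/2+i}=2i$ for $1\le i\le \frac{k}{2}$, advancing indices by $\frac{k}{2}$ modulo $k+1$ yields $2i+1\mapsto 2i$, $2i\mapsto 2i-1$, and $1\mapsto k+1$ at the wrap (where $k+i \equiv i-1 \pmod{k+1}$), which is exactly $(k+1~k~\cdots~1)$. The trade-off: the paper's tile-tracking proofs are shorter and make the puzzle intuition visible, but they require the reader to trust the diagrams; your version is longer but mechanical and self-verifying, and your disjoint-support factorization of $\rho$ is a reusable fact that would also streamline other computations with powers of $\rho$ elsewhere in the paper.
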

\begin{proof}
(a) As discussed above, the key observation is that each flip-translation moves one tile from the left of the turntable to the right.  Hence, after performing this procedure $n-k$ times, we will have moved each of the tiles $n,~ n-1,~ \dots,~ k+1$, one at a time, from the left side of the window to the right side.  Moreover, since $n-k$ is even, our block will be in the proper order -- see below:
$$
\begin{tabular}{c} $\tau\rho^{n-k} \left(\block{\circled{1} \dots \circled{k-1}}~ \circled{k}\right) \circled{k+1} \dots \circled{n}$\\
$=\tau\left(\block{\circled{1} \dots \circled{k-1}}~\circled{k+1}\right) \circled {k+2} \dots \circled{n}~ \circled{k}$\\
$=\left(\circled{k} ~ \block{\circled{1} \dots \circled{k-1}}\right) \circled {k+1} \dots \circled{n}$
\end{tabular}
$$
Hence, $\tau \rho^{n-k} = (1~2~\dots~k)$ as claimed.\\
(b) Observe that when $k$ is even, using $\pi$ (which moves two tiles at a time) $\frac{k}{2}$ times serves to exactly move the $k$ tiles $2, 3, \dots, k+1$ in order to the left of 1, while leaving all other tiles fixed.  Hence
$$\pi^{k/2} \left(\circled{1} \dots \circled{k}\right) \circled{k+1} \dots \circled{n} = \left(\circled{2} \dots \circled{k+1}\right) \circled{1}~\circled{k+2} \dots \circled{n}$$
and $\pi^{k/2} = (k+1~k~\dots~1)$ as claimed.
\end{proof}

\subsection{Creating 3-cycles.}
With flip-translations, shuffles, and some $k$ and $(k+1)$-cycles at our disposal, we now create 3-cycles of various varieties.  Recall that consecutive 3-cycles will allow us to generate $A_n$ by Proposition~\ref{gensets}.

\begin{lemma}\label{3-cycles} (a) If $n-k$ is even, $\pi^{-1}\left(\tau \rho^{n-k}\right)^2$ is the 3-cycle $(k-1~k~k+1)$.\\
(b) If $k$ is even, $\pi^{k/2} \tau \pi^{k/2} \phi \rho^{-1}$ is the 3-cycle $(k~k+1~k+2)$.
\end{lemma}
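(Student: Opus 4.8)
The plan for both parts is a direct computation: I would rewrite each expression so that its factors are permutations whose cycle forms are already known, and then compose them by tracking the image of each symbol, watching almost everything cancel. The only genuinely delicate point is the bookkeeping near the three indices $k-1, k, k+1$ (in part (a)) and $k, k+1, k+2$ (in part (b)), where the ``wrap-around'' of the long cycles fails to cancel and leaves behind exactly a $3$-cycle. Both parts follow the same template, so I would present them in parallel, invoking the relevant half of Lemma~\ref{k-cycles} in each case.

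For part (a), I would first use Lemma~\ref{k-cycles}(a) to replace $\tau\rho^{n-k}$ by the consecutive $k$-cycle $(1~2~\dots~k)$, so that $(\tau\rho^{n-k})^2$ sends $i \mapsto i+2$ for $1 \le i \le k-2$, with $k-1 \mapsto 1$ and $k \mapsto 2$. Next I would record the action of $\pi$: from the displayed cycle forms of $\pi$ one reads the uniform description $\pi(i) = i+2$ for $1 \le i \le k-1$, $\pi(k)=1$, and $\pi(k+1)=2$, with $\pi$ fixing $k+2, \dots, n$; crucially this holds for both parities of $k$, so no case split is needed and the hypothesis ``$n-k$ even'' enters only through Lemma~\ref{k-cycles}(a). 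Inverting gives $\pi^{-1}(j) = j-2$ for $3 \le j \le k+1$, with $1 \mapsto k$ and $2 \mapsto k+1$. Composing $\pi^{-1}$ with $(\tau\rho^{n-k})^2$, one checks that for $1 \le i \le k-2$ the two ``$+2$'' shifts undo each other, that $k+2, \dots, n$ are fixed, and that only $k-1, k, k+1$ are moved, yielding exactly $(k-1~k~k+1)$.

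For part (b) the same strategy applies but with more factors. Here I would use Lemma~\ref{k-cycles}(b) to replace each $\pi^{k/2}$ by the consecutive $(k+1)$-cycle $(k+1~k~\dots~1)$, and I would simplify the tail using $\rho^{-1} = \tau^{-1}\phi$ (since $\phi^2=(1)$), so that $\phi\rho^{-1} = \phi\tau^{-1}\phi$; by Lemma~\ref{conjugate} this tail is the conjugate of $\tau^{-1}$ by $\phi$, an explicit $n$-cycle. With the cycle form of $\tau$ in hand as well, I would build the product in stages: first evaluate the conjugate-like middle factor $\pi^{k/2}\tau\pi^{k/2}$ on every symbol, then multiply by $\phi\rho^{-1}$ and evaluate again. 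As in part (a), the bulk of the symbols are returned to themselves and the computation collapses to the single $3$-cycle $(k~k+1~k+2)$.

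The main obstacle is purely organizational: keeping the composition order consistent and handling the handful of boundary indices correctly, since it is precisely there that the long cycles interact and the cancellation breaks down. I expect no conceptual difficulty beyond this careful tracking. These $3$-cycles are the payoff, because by Proposition~\ref{gensets} a single consecutive $3$-cycle together with $\tau=(1~2~\dots~n)$ already generates $A_n$.
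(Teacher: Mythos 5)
Your proposal is correct and takes essentially the same route as the paper: both parts invoke Lemma~\ref{k-cycles} and then verify the product by direct composition, the only difference being that the paper tracks tiles through puzzle diagrams while you track images of symbols in cycle notation (your parity-uniform formula $\pi(i)=i+2$ for $1\le i\le k-1$, $\pi(k)=1$, $\pi(k+1)=2$ is exactly the paper's general displayed action of $\pi$). The boundary bookkeeping you outline does check out, yielding precisely $(k-1~k~k+1)$ in (a) and $(k~k+1~k+2)$ in (b).
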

\begin{proof} (a) We use the inverse shuffle, $\pi^{-1}$, and the $k$-cycle $\tau\rho^{n-k}$ from Lemma~\ref{k-cycles} since $n-k$ is even.
$$
\begin{tabular}{c} $\pi^{-1}\left(\tau\rho^{n-k}\right)^2 \left(\circled{1} \dots \circled{k}\right) \circled{k+1} \dots \circled{n}$\\
$\pi^{-1}\left(\tau\rho^{n-k}\right) \left(\circled{k}~\circled{1} \dots \circled{k-1}\right) \circled{k+1} \dots \circled{n}$\\
$\pi^{-1} \left(\circled{k-1}~\circled{k}~\circled{1} \dots \circled{k-2}\right) \circled{k+1} \dots \circled{n}$\\
$\left(\circled{1} \dots \circled{k-2}~\circled{k+1}~\circled{k-1} \right)\circled{k}~ \circled{k+2} \dots \circled{n}$.
\end{tabular}
$$
Observe that all tiles are in their original positions, except $k-1$ (now in position $k$), $k$ (now in position $k+1$), and $k+1$ (now in position $k-1$).\\
(b) Here we use the $(k+1)$-cycle $\pi^{k/2}$ from Lemma~\ref{k-cycles} since $k$ is even.
$$
\begin{tabular}{c} $\pi^{k/2} \tau \pi^{k/2} \phi \rho^{-1}\left(\circled{1} \dots \circled{k}\right) \circled{k+1} \dots \circled{n}$\\
$\pi^{k/2} \tau \pi^{k/2} \phi \left(\circled{k-1} \dots \circled{1}~\circled{k+1}\right) \circled{k+2} \dots \circled{n}~\circled{k}$\\
$\pi^{k/2} \tau \pi^{k/2} \left(\circled{k+1}~ \circled{1} \dots \circled{k-1}\right) \circled{k+2} \dots \circled{n} ~ \circled{k} $\\
$\pi^{k/2} \tau \left(\circled{1} \dots \circled{k-1}~\circled{k+2}\right) \circled{k+1}~\circled{k+3} \dots \circled{n} ~ \circled{k}$\\
$\pi^{k/2} \left(\circled{k}~\circled{1} \dots \circled{k-1}\right) \circled{k+2}~\circled{k+1}~\circled{k+3} \dots \circled{n}$\\
$\left(\circled{1} \dots \circled{k-1}~\circled{k+2}\right)\circled{k}~\circled{k+1}~\circled{k+3} \dots \circled{n}$.
\end{tabular}
$$
Observe that all tiles are in their original positions, except $k$ (now in position $k+1$), $k+1$ (now in position $k+2$), and $k+2$ (now in position $k$).
\end{proof}

\begin{lemma}\label{kodd3-cycle} If $k$ is odd, the element $\pi\tau\pi^{-1}\tau^{-1}$ is the 3-cycle $(1~3~k+2)$.
\end{lemma}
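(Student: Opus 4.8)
The plan is to compute the commutator $\pi\tau\pi^{-1}\tau^{-1}$ directly, leaning on the explicit cycle form of $\pi$ recorded above: since $k$ is odd, $\pi = (1~3~5~\dots~k)(2~4~\dots~k+1)$, and in particular $\pi$ fixes every tile numbered $k+2$ or higher. I would first regroup the expression as $\left(\pi\tau\pi^{-1}\right)\tau^{-1}$ and recognize the inner factor as a conjugate of the $n$-cycle $\tau=(1~2~\dots~n)$. By Lemma~\ref{conjugate}, conjugating by $\pi$ simply applies $\pi$ to each entry, so $\pi\tau\pi^{-1}=(\pi(1)~\pi(2)~\dots~\pi(n))$.

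Next I would read off the action of $\pi$ entry by entry from its two-cycle description: it sends $i\mapsto i+2$ for every $1\le i\le k-1$, wraps $k\mapsto 1$ and $k+1\mapsto 2$, and fixes each $i\ge k+2$. Threading these values into the cycle yields
\[\pi\tau\pi^{-1}=(3~4~\dots~k+1~1~2~k+2~k+3~\dots~n),\]
which is again an $n$-cycle, as it must be.

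Finally I would compose this $n$-cycle with $\tau^{-1}$ and track the image of each point. Most indices cancel by a telescoping effect: for $4\le x\le k+1$, $\tau^{-1}$ carries $x$ to $x-1$, which the $n$-cycle then returns to $x$, and likewise every $x\ge k+3$ is fixed, as is $x=2$ (which goes to $1$ and back). Only three tiles survive, and chasing them gives $1\mapsto 3$, $3\mapsto k+2$, and $k+2\mapsto 1$. Hence $\pi\tau\pi^{-1}\tau^{-1}=(1~3~k+2)$, as claimed.

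The delicate part is the handling of the boundary indices $k$, $k+1$, $k+2$, where $\pi$ stops acting and the fixed tail begins; it is precisely at this junction that the telescoping cancellation fails and the three moved tiles $1$, $3$, and $k+2$ are produced, so the bookkeeping must be done carefully to avoid an off-by-one error. A purely diagrammatic derivation --- applying $\tau^{-1}$, $\pi^{-1}$, $\tau$, and $\pi$ in turn to the solved arrangement --- is also available and would match the style of the preceding lemmas, but the conjugation route above appears the most economical.
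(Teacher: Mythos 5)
Your proof is correct, but it takes a different route from the paper's. You compute the commutator as $\left(\pi\tau\pi^{-1}\right)\tau^{-1}$, using Lemma~\ref{conjugate} to relabel the $n$-cycle $\tau$ by $\pi$ and then cancelling pointwise; the bookkeeping at the splice points checks out: with $\sigma=\pi\tau\pi^{-1}=(3~4~\dots~k+1~1~2~k+2~\dots~n)$ one indeed gets $1\mapsto 3\mapsto k+2\mapsto 1$ under $\sigma\tau^{-1}$ with all other points fixed (this uses the standing assumption $k<n-1$, i.e.\ $k+2\le n$, so tile $k+2$ exists, and your tail argument degenerates harmlessly when $n=k+2$). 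The paper instead argues diagrammatically, applying $\tau^{-1}$, $\pi^{-1}$, $\tau$, $\pi$ in turn to the solved arrangement and reading off the three displaced tiles --- precisely the alternative you flagged at the end. The paper also frames the computation with a conceptual observation that your argument leaves implicit: $\tau\pi\tau^{-1}$ is the shuffle shifted one position to the right, namely $(2~4~\dots~k+1)(3~5~\dots~k~k+2)$, whose even part coincides with that of $\pi$, so the product $\pi\left(\tau\pi\tau^{-1}\right)^{-1}$ must permute only odd tiles. That parity insight predicts a priori that the resulting 3-cycle $(1~3~k+2)$ involves only odd numbers (note $k+2$ is odd since $k$ is), which is exactly the feature Proposition~\ref{consecutiveodds} exploits next; your conjugate-and-telescope computation is arguably more economical and makes the near-total cancellation transparent, but it verifies this parity property after the fact rather than explaining it.
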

\begin{proof} The key idea here is that, $\tau\pi\tau^{-1}$ is equivalent to applying $\pi$ one position to the right, hence $\tau \pi \tau^{-1}$ is the pair of disjoint $\frac{k+1}{2}$-cycles $(2~4~\dots~k+1)(3~5~\dots~k~k+2)$.  The even portion of $\tau \pi \tau^{-1}$ is exactly the same as the even portion from $\pi$ itself, while the odd part is different.  Thus, if we multiply $\pi$ together with $\left(\tau \pi \tau^{-1}\right)^{-1}$, we will obtain a permutation that only permutes odd tiles:
$$
\begin{tabular}{c} $\pi\tau\pi^{-1}\tau^{-1} \left(\circled{1} \dots \circled{k}\right) \circled{k+1} \dots \circled{n}$\\
$\pi\tau\pi^{-1} \left(\circled{2} \dots \circled{k+1}\right) \circled{k+2} \dots \circled{n}~\circled{1}$\\
$\pi\tau \left(\circled{4} \dots \circled{k+1}~\circled{k+2}~\circled{2}\right) \circled{3}~\circled{k+3} \dots \circled{n}~\circled{1}$\\
$\pi \left(\circled{1}~\circled{4} \dots \circled{k+1}~\circled{k+2}\right)\circled{2}~ \circled{3}~\circled{k+3} \dots \circled{n}$\\
$\left(\circled{k+2}~\circled{2}~\circled{1}~\circled{4} \dots \circled{k}\right)\circled{k+1}~ \circled{3}~\circled{k+3} \dots \circled{n}$
\end{tabular}
$$
Notice that $1$ is in the 3rd position, $3$ is in the $(k+2)$-th position, and $k+2$ is in the 1st position, while all other tiles are in their original positions.
\end{proof}

\begin{prop}\label{consecutiveodds} If $n$ is even and $k$ is odd, then the consecutive odd 3-cycle $(1~3~5)$ is in the oval track group $OT_{n,k}$.
\end{prop}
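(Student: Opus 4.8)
The plan is to start from the 3-cycle produced by Lemma~\ref{kodd3-cycle} and push it into the desired position by conjugating with the shuffle. Since $k$ is odd, Lemma~\ref{kodd3-cycle} immediately gives $(1~3~k+2) \in OT_{n,k}$, and because $k$ is odd the tile $k+2$ is itself odd, so this is already a 3-cycle living entirely among the odd tiles. Note that our standing assumptions $n \geq 4$ and $1 < k < n-1$ guarantee $k + 2 \leq n$, so $k+2$ is a genuine tile. If $k = 3$ then $k+2 = 5$ and this cycle is exactly $(1~3~5)$, so there is nothing left to prove; I may therefore assume $k \geq 5$, which in particular makes $1, 3, 5, k+2$ four distinct odd tiles.

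Next I would use the shuffle $\pi$ to slide the first two entries of this cycle forward. Recall that for odd $k$ the shuffle is the pair of disjoint cycles $\pi = (1~3~5~\dots~k)(2~4~\dots~k+1)$, so $\pi(1) = 3$ and $\pi(3) = 5$; moreover $k + 2 > k$ lies outside the support of $\pi$ and is therefore fixed. Conjugating by $\pi \in OT_{n,k}$ and applying Lemma~\ref{conjugate} (which realizes conjugation as a relabeling of cycle entries), I obtain
\[
\pi (1~3~k+2) \pi^{-1} = (\pi(1)~\pi(3)~\pi(k+2)) = (3~5~k+2) \in OT_{n,k}.
\]

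Finally, I would multiply these two 3-cycles. A direct computation (applying $(3~5~k+2)$ first, per the paper's convention) shows the two copies of $k+2$ cancel and the product telescopes:
\[
(1~3~k+2)(3~5~k+2) = (1~3~5),
\]
so $(1~3~5) \in OT_{n,k}$, as desired.

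I expect the only delicate point to be bookkeeping rather than genuine difficulty: one must track the composition order carefully so that the shared entry $k+2$ is returned to itself (it is sent $k+2 \mapsto 3 \mapsto k+2$ across the two factors) while $5 \mapsto k+2 \mapsto 1$ closes up the cycle. It is worth remarking that this argument uses only the hypothesis that $k$ is odd together with the ambient assumptions; the parity of $n$ plays no role beyond situating the proposition within the relevant case. One could equally well view the conjugates $\pi^{j} (1~3~k+2)\pi^{-j}$ as producing every 3-cycle $(a~a+2~k+2)$ with $a$ an odd tile and assemble consecutive odd 3-cycles from these, but the single product above already suffices.
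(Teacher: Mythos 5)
Your proof is correct, and while it starts from the same place as the paper's, it finishes by a genuinely different mechanism. Both arguments begin with the 3-cycle $\pi\tau\pi^{-1}\tau^{-1} = (1~3~k+2)$ of Lemma~\ref{kodd3-cycle}. The paper then conjugates by the square of a \emph{shifted} shuffle, $\Gamma = \tau^3\pi\tau^{-3} = (4~6~\dots~k+3)(5~7~\dots~k+2~k+4)$, which fixes $1$ and $3$ and sends $k+2 \mapsto 5$ under $\Gamma^2$, producing $(1~3~5)$ in a single conjugation; but since $\Gamma$ has support $\{4,\dots,k+4\}$ this forces $n \geq k+4$, and the boundary case $n = k+3$ must be treated separately by conjugating $(1~3~n-1)$ by $\tau^2$. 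You instead conjugate by $\pi$ itself, whose support $\{1,\dots,k+1\}$ always fits inside the track because $k < n-1$, and then compose the two overlapping 3-cycles: with the paper's right-to-left convention, $(1~3~k+2)(3~5~k+2)$ sends $1 \mapsto 3$, $3 \mapsto 5$, $5 \mapsto k+2 \mapsto 1$, and returns $k+2 \mapsto 3 \mapsto k+2$, exactly as your bookkeeping says, so the product is $(1~3~5)$. Your lone case split, $k = 3$, is genuinely needed (there $\pi(3) = 1$, not $5$) and correctly dispatched since $(1~3~k+2) = (1~3~5)$ outright. Your remark that the parity of $n$ plays no role is also accurate: your argument works uniformly whenever $n \geq k+2$, which the standing hypothesis $k < n-1$ guarantees, whereas the paper's route forces the case analysis on $n$. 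One small looseness worth tightening: you justify $\pi(k+2) = k+2$ by ``$k+2 > k$,'' but $k+1 > k$ lies \emph{in} the support of $\pi$; the correct observation is that the support is $\{1,\dots,k+1\}$ and $k+2 > k+1$. In short, the paper buys a one-step conjugation at the cost of a case split on $n$; your product of two 3-cycles sharing the entry $k+2$ buys a uniform, case-free argument at the cost of one extra multiplication.
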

\begin{proof} Here it makes much more sense to use symmetric group tools and notation rather than dealing with puzzles.  First we'll look at the case when $n \geq k+4$ (which really means $n \geq k+5$ since $n$ is even).  In this case, we let $\Gamma=\tau^3 \pi \tau^{-3}$ and observe that $\Gamma$ is the pair of $\frac{k+1}{2}$-cycles $(4~6~\dots~k+3)(5~7~\dots~k+2~k+4)$ by Lemma~\ref{conjugate}.  Hence, conjugating the 3-cycle $\pi\tau\pi^{-1}\tau^{-1}=(1~3~k+2)$ from Lemma~\ref{kodd3-cycle} by $\Gamma$ gives:
$$\Gamma^2 (\pi \tau \pi^{-1} \tau^{-1})\Gamma^{-2} = (\Gamma^2(1)~\Gamma^2(3)~\Gamma^2(k+2)) = (1~3~5)$$
Unfortunately, this method only works when $n \geq k+4$, hence we must deal with $n=k+3$ separately (notice $n=k+2$ makes no sense since $n$ is even and $k$ is odd).  Here, our 3-cycle is $\pi \tau \pi^{-1} \tau^{-1} = (1~3~n-1)$.  Conjugating by $\tau^2$ we obtain:
$$\tau^2 (\pi \tau \pi^{-1} \tau^{-1}) \tau^{-2} = \tau^2 (1~3~n-1) \tau^{-2} = (\tau^2(1)~\tau^2(3)~\tau^2(n-1)) = (3~5~1).$$  
Thus, in all cases we have generated the consecutive odd 3-cycle $(1~3~5)$.
\end{proof}

\subsection{Subgroup bounds.}
With all of the nice permutations we've constructed, we will now spend time putting some helpful bounds on $OT_{n,k}$ by both generating subgroups of $OT_{n,k}$ and also finding proper subgroups of $S_n$ which $OT_{n,k}$ must live inside in the various cases.  These bounding groups will make it easier to describe the group $OT_{n,k}$ in the general setting.

\begin{lemma}\label{nicecases} If $n$ and $k$ are both even, or if $n$ is odd, then $A_n \leq OT_{n,k}$.
\end{lemma}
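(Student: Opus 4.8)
The plan is to reduce everything to Proposition~\ref{gensets}. Since the translation $\tau = (1~2~\dots~n)$ is one of the defining generators of $OT_{n,k}$, it is automatically an element of the group, and Proposition~\ref{gensets} tells us that $A_n$ is contained in the subgroup generated by $\tau$ together with \emph{any} single consecutive 3-cycle. Hence it suffices to exhibit one consecutive 3-cycle lying in $OT_{n,k}$ under each of the two hypotheses, and the genuinely computational part of that work has already been carried out in Lemma~\ref{3-cycles}.

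The natural way to organize the argument is to split on the parity of $k$ rather than on the original case division. First I would treat the case where $k$ is even; this simultaneously covers the ``$n$ and $k$ both even'' hypothesis and the ``$n$ odd, $k$ even'' portion of the other hypothesis. Here Lemma~\ref{3-cycles}(b) directly produces the consecutive 3-cycle $(k~k+1~k+2)$ as the element $\pi^{k/2}\tau\pi^{k/2}\phi\rho^{-1}$, which is a word in the generators and so lies in $OT_{n,k}$.

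Next I would handle the case where $k$ is odd. Under our hypotheses an odd $k$ forces $n$ to be odd, since the ``both even'' alternative is then ruled out. Consequently $n-k$ is even, which is exactly the condition needed to invoke Lemma~\ref{3-cycles}(a): the element $\pi^{-1}(\tau\rho^{n-k})^2$ is then the consecutive 3-cycle $(k-1~k~k+1)$, again an element of $OT_{n,k}$. The standing assumptions $n \geq 4$ and $1 < k < n-1$ guarantee that both $(k~k+1~k+2)$ and $(k-1~k~k+1)$ are honest consecutive 3-cycles with all entries in range. In either branch we have located a consecutive 3-cycle, so applying Proposition~\ref{gensets} yields $A_n \leq OT_{n,k}$ and completes the proof.

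Because the substantive construction has been isolated in the earlier lemmas, there is no serious obstacle remaining; the one point requiring care is confirming that the parity bookkeeping is airtight. Specifically, I must verify that the ``$k$ odd'' branch really does force $n$ odd (so that $n-k$ is even and Lemma~\ref{3-cycles}(a) is available), and that the two branches together exhaust both hypotheses of the statement. Checking this exhaustiveness is the only place where a careless case split could leave a gap.
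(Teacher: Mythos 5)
Your proof is correct and follows essentially the same route as the paper: both arguments note that $\tau \in OT_{n,k}$, extract a consecutive 3-cycle from Lemma~\ref{3-cycles}, and conclude via Proposition~\ref{gensets}. The only (cosmetic) difference is the case bookkeeping --- you split on the parity of $k$, using part (b) whenever $k$ is even, while the paper applies part (a) whenever $n$ and $k$ share a parity and reserves part (b) for $n$ odd, $k$ even --- and your parity check that $k$ odd forces $n$ odd (hence $n-k$ even) is airtight.
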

\begin{proof} Since we always have the $n$-cycle $\tau=(1~2~\dots~n)$, it suffices to show that we can construct a consecutive 3-cycle in each case by Proposition~\ref{gensets}.  By Lemma~\ref{3-cycles}, if $n$ and $k$ have the same parity, we can generate $(k-1~k~k+1)$ and if $n$ is odd and $k \equiv 0,2 \md$, then we have $(k~k+1~k+2)$.
\end{proof}

Lemma~\ref{nicecases} gives us a strong lower bound in these cases as now $OT_{n,k}$ must be either $A_n$ or $S_n$ by Lagrange's Theorem.  This still leaves us with the situation where $n$ is even and $k \equiv 1,3 \md$.  These cases are distinctly different from the others as $\tau$ is a Type II element when $n$ is even, and $\phi$ is a Type I element when $k \equiv 1,3 \md$
.  Thus, as our two generators are elements of the parity subgroup, $PS_n$, it follows that $OT_{n,k} \leq PS_n$ in these cases.  As it turns out, the two potential cases here must also be separated, however they do share some structure.

First observe that if we have any permutation $\alpha \in OT_{n,k}$ that is Type II, then there exists a Type I permutation $\beta =\tau^{-1}\alpha \in OT_{n,k}$ such that $\alpha=\tau \beta$.  In view of this observation, we can reduce the problem of describing the group $OT_{n,k}$ to one of describing only the Type I permutations, which are contained in $S^{^{\text{odd}}}_{\frac{n}{2}} \times S^{^{\text{even}}}_{\frac{n}{2}}$, instead.

\begin{prop}\label{weirdnormal} (a) If $n$ is even and $k \equiv 1,3 \md$, then $A^{^{\text{odd}}}_{\frac{n}{2}} \times A^{^{\text{even}}}_{\frac{n}{2}} \leq OT_{n,k}$.\\
(b) If $n$ is even, and $k \equiv 3 \md$, then $S^{^{\text{odd}}}_{\frac{n}{2}} \times S^{^{\text{even}}}_{\frac{n}{2}} \leq OT_{n,k}$.
\end{prop}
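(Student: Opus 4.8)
The plan is to carry out both parts inside the subgroup $S^{^{\text{odd}}}_{\frac{n}{2}} \times S^{^{\text{even}}}_{\frac{n}{2}}$ that contains every Type I element: for part (a) I build each factor out of 3-cycles, and for part (b) I enlarge $A^{^{\text{odd}}}_{\frac{n}{2}} \times A^{^{\text{even}}}_{\frac{n}{2}}$ by exhibiting coset representatives that flip the appropriate signs.

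For part (a), I would begin from the consecutive odd 3-cycle $(1~3~5) \in OT_{n,k}$ supplied by Proposition~\ref{consecutiveodds}. Conjugating by $\tau \in OT_{n,k}$ and invoking Lemma~\ref{conjugate} gives $\tau(1~3~5)\tau^{-1} = (2~4~6)$, a consecutive even 3-cycle, since $\tau$ carries each odd entry to the next (even) integer. Next I would conjugate by $\tau^2$, whose restriction to the odds is the $\frac{n}{2}$-cycle $(1~3~\cdots~n-1)$ and whose restriction to the evens is $(2~4~\cdots~n)$; repeatedly applying this to $(1~3~5)$ and to $(2~4~6)$ produces each consecutive odd 3-cycle and each consecutive even 3-cycle, and every such cycle is a $\tau^2$-conjugate of an element of $OT_{n,k}$, hence lies in $OT_{n,k}$. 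Applying Proposition~\ref{gensets} separately within the odd numbers and the even numbers (each relabelled as $1,\dots,\frac{n}{2}$), these consecutive 3-cycles generate $A^{^{\text{odd}}}_{\frac{n}{2}}$ and $A^{^{\text{even}}}_{\frac{n}{2}}$ respectively; since the two groups have disjoint support they commute and together generate the direct product, giving $A^{^{\text{odd}}}_{\frac{n}{2}} \times A^{^{\text{even}}}_{\frac{n}{2}} \leq OT_{n,k}$.

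For part (b), the group $A^{^{\text{odd}}}_{\frac{n}{2}} \times A^{^{\text{even}}}_{\frac{n}{2}}$ sits inside $S^{^{\text{odd}}}_{\frac{n}{2}} \times S^{^{\text{even}}}_{\frac{n}{2}}$ with index $4$ and quotient $\Z_2 \times \Z_2$ recording the sign of the odd-part and the sign of the even-part, so it suffices to find elements of $OT_{n,k}$ in the two ``mixed'' cosets, where exactly one part is an odd permutation. My candidates are $\phi$ and $\tau\phi\tau^{-1}$, both Type I and hence inside $S^{^{\text{odd}}}_{\frac{n}{2}} \times S^{^{\text{even}}}_{\frac{n}{2}}$. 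The crux is to track the signs: writing $\phi$ as the product of its $\lfloor k/2 \rfloor = \frac{k-1}{2}$ transpositions $(i~k+1-i)$, each transposition pairs two numbers of equal parity, so $\phi$ splits cleanly into an odd-part and an even-part. When $k \equiv 3 \md$ the total count $\frac{k-1}{2}$ is odd, which forces the number of odd-pairs and the number of even-pairs to have opposite parities, so exactly one of the two parts of $\phi$ is an odd permutation and $\phi$ lands in a mixed coset. Because conjugation by $\tau$ swaps odd entries with even entries, $\tau\phi\tau^{-1}$ has the signs of its two parts interchanged and so occupies the \emph{other} mixed coset; together with the identity coset $A^{^{\text{odd}}}_{\frac{n}{2}} \times A^{^{\text{even}}}_{\frac{n}{2}}$ and the product $\phi\cdot\tau\phi\tau^{-1}$ (which fills the remaining coset) we reach all four cosets, yielding $S^{^{\text{odd}}}_{\frac{n}{2}} \times S^{^{\text{even}}}_{\frac{n}{2}} \leq OT_{n,k}$.

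I expect the sign bookkeeping in part (b) to be the main obstacle. One must confirm not only that $\phi$ carries an odd total number of transpositions \emph{precisely} when $k \equiv 3 \md$ (whereas for $k \equiv 1 \md$ the two parts share the same sign and $\phi,\tau\phi\tau^{-1}$ would both land in $(-,-)$, so the argument correctly fails), but also that $\phi$ and $\tau\phi\tau^{-1}$ genuinely occupy \emph{distinct} mixed cosets rather than coinciding. A clean sanity check is $k=3$, where $\phi=(1~3)$ has odd odd-part and trivial even-part while $\tau\phi\tau^{-1}=(2~4)$ is exactly the reverse. By contrast, the steps in part (a) are comparatively routine once one notes that conjugation by $\tau$ toggles parity while conjugation by $\tau^2$ preserves it, so Proposition~\ref{gensets} can be run independently on each factor.
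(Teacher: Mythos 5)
Your proposal is correct and takes essentially the same route as the paper: part (a) is the paper's argument (the 3-cycle from Proposition~\ref{consecutiveodds} plus repeated conjugation by $\tau$ to harvest all consecutive odd and all consecutive even 3-cycles), and part (b) rests on the paper's same two observations --- that for $k \equiv 3 \md$ the odd-part and even-part of $\phi$ carry opposite signs, and that conjugation by $\tau$ interchanges those signs --- finished by the same Lagrange-style coset count. The only cosmetic difference is that the paper first multiplies $\phi$ by its even-signed portion (already in $OT_{n,k}$ by part (a)) to isolate an odd element supported on a single factor, whereas you keep $\phi$ and $\tau\phi\tau^{-1}$ whole as representatives of the two mixed cosets of the index-$4$ subgroup $A^{^{\text{odd}}}_{\frac{n}{2}} \times A^{^{\text{even}}}_{\frac{n}{2}}$.
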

\begin{proof} (a) Proposition~\ref{gensets} applied to the alternating group on only odd numbers (or even numbers) tells us that $A^{^\text{odd}}_{\frac{n}{2}}$ is generated by the consecutive \emph{odd} 3-cycles $(1~3~5), (3~5~7), \dots, (n-5~n-3~n-1)$ and similarly, $A^{^\text{even}}_{\frac{n}{2}}$ is generated by the consecutive even 3-cycles.  By Proposition~\ref{consecutiveodds}, we can generate the consecutive odd 3-cycle $(1~3~5)$.  Thus, with repeated conjugation by $\tau$ we obtain the complete set of both the consecutive odd 3-cycles and the consecutive even 3-cycles.\\
(b) Recall that when $k \equiv 3 \md$, then $\phi$ will have an odd number of transpositions.   Observe that there must be an even number of transpositions involving only odd numbers or involving only even numbers.  That portion of $\phi$ will be an element in $A^{^{\text{odd}}}_{\frac{n}{2}}$ or in $A^{^{\text{even}}}_{\frac{n}{2}}$ and thus its inverse (itself) is in $OT_{n,k}$ by part (a).  Multiplying that inverse by $\phi$ will leave us with an odd element in either $S^{^{\text{odd}}}_{\frac{n}{2}}$ or $S^{^{\text{even}}}_{\frac{n}{2}}$.  We can then conjugate by $\tau$ to obtain an odd element of the other variety.  It follows that we can generate all of $S^{^{\text{odd}}}_{\frac{n}{2}} \times S^{^{\text{even}}}_{\frac{n}{2}}$ by Lagrange's Theorem.
\end{proof}

When $n$ is even and $k\equiv 3 \md$ we have just seen that $OT_{n,k}$ contains all of the Type I permutations in $S_n$.  Recall, however, that when $k \equiv 1 \md$ instead, then $\phi \in A_n$.  Thus, if $\alpha \in OT_{n,k} = \langle \phi, \tau \rangle$ is a Type I permutation then it follows that $\alpha \in S^{^{\text{odd}}}_{\frac{n}{2}} \times S^{^{\text{even}}}_{\frac{n}{2}} \cap A_n$ as any way of writing it as a product of $\phi$'s and $\tau$'s must have an even number of $\tau$'s.  Hence, $OT_{n,k}$ definitely does not contain the full set of Type I permutations when $k \equiv 1 \md$.  To describe the Type I permutations that are generated we must split this case even further.

Observe that when $k\equiv 1 ~(\operatorname{mod}~8)$, $\lfloor\frac{k}{2}\rfloor$ is a number divisible by 4, hence $\phi$ will be in $A^{^{\text{odd}}}_{\frac{n}{2}} \times A^{^{\text{even}}}_{\frac{n}{2}}$.  However, if $k\equiv 5 ~(\operatorname{mod}~8)$ instead, then $\lfloor \frac{k}{2} \rfloor$ is divisible by 2, but \textbf{not} by 4.  Hence $\phi$ will be in $\left(S^{^{\text{odd}}}_{\frac{n}{2}} \times S^{^{\text{even}}}_{\frac{n}{2}} \cap A_n\right) \setminus A^{^{\text{odd}}}_{\frac{n}{2}} \times A^{^{\text{even}}}_{\frac{n}{2}}$.  Moreover, since $n$ is even, $\tau^2 = (1~3~\dots~n-1)(2~4~\dots~n) \in A_n$ and is Type I.  Each cycle is an $\frac{n}{2}$-cycle, hence $\tau^2 \in A^{^{\text{odd}}}_{\frac{n}{2}} \times A^{^{\text{even}}}_{\frac{n}{2}}$ if and only if $n \equiv 2 \md$.

\begin{prop}\label{typeIeven} If $n \equiv 0 \md$ OR $k\equiv 5 ~(\operatorname{mod}~8)$, then $S^{^{\text{odd}}}_{\frac{n}{2}} \times S^{^{\text{even}}}_{\frac{n}{2}} \cap A_n$ is a subgroup of $OT_{n,k}$.
\end{prop}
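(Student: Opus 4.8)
The plan is to exploit the short index chain sitting above the subgroup already produced in Proposition~\ref{weirdnormal}(a), together with a single explicitly-available witness. Throughout we are in the ambient case where $n$ is even and $k \equiv 1 \md$ (note $k \equiv 5~(\operatorname{mod}~8)$ forces $k \equiv 1 \md$, and the $n \equiv 0 \md$ branch is handled under this same standing assumption), so Proposition~\ref{weirdnormal}(a) applies and gives $A^{^{\text{odd}}}_{\frac{n}{2}} \times A^{^{\text{even}}}_{\frac{n}{2}} \leq OT_{n,k}$. Write $G = S^{^{\text{odd}}}_{\frac{n}{2}} \times S^{^{\text{even}}}_{\frac{n}{2}}$ for the Type I permutations. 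First I would record the index bookkeeping: the sign map $(\sigma_{\text{odd}}, \sigma_{\text{even}}) \mapsto \operatorname{sgn}(\sigma_{\text{odd}})\operatorname{sgn}(\sigma_{\text{even}})$ has kernel $G \cap A_n$, and since $n \geq 4$ it is onto $\{\pm 1\}$, so $[G : G \cap A_n] = 2$. Inside $G \cap A_n$ one has $\operatorname{sgn}(\sigma_{\text{odd}}) = \operatorname{sgn}(\sigma_{\text{even}})$, and the further map $(\sigma_{\text{odd}}, \sigma_{\text{even}}) \mapsto \operatorname{sgn}(\sigma_{\text{odd}})$ restricted to $G \cap A_n$ has kernel exactly $A^{^{\text{odd}}}_{\frac{n}{2}} \times A^{^{\text{even}}}_{\frac{n}{2}}$ and is again surjective, so $[G \cap A_n : A^{^{\text{odd}}}_{\frac{n}{2}} \times A^{^{\text{even}}}_{\frac{n}{2}}] = 2$. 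Because this index is $2$, any subgroup of $G \cap A_n$ that strictly contains $A^{^{\text{odd}}}_{\frac{n}{2}} \times A^{^{\text{even}}}_{\frac{n}{2}}$ must equal all of $G \cap A_n$.

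With this reduction it suffices to produce one element of $OT_{n,k}$ lying in $(G \cap A_n) \setminus \left(A^{^{\text{odd}}}_{\frac{n}{2}} \times A^{^{\text{even}}}_{\frac{n}{2}}\right)$, i.e.\ a Type I even permutation whose odd and even components are each odd permutations. I would supply two different witnesses according to the hypothesis, both manifestly in $OT_{n,k}$. When $n \equiv 0 \md$, take $\tau^2 = (1~3~\dots~n-1)(2~4~\dots~n)$: it is Type I and even (as $\tau$ is an odd $n$-cycle for even $n$), and since $\frac{n}{2}$ is even each $\frac{n}{2}$-cycle is an odd permutation, placing $\tau^2$ outside $A^{^{\text{odd}}}_{\frac{n}{2}} \times A^{^{\text{even}}}_{\frac{n}{2}}$. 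When $k \equiv 5~(\operatorname{mod}~8)$, take $\phi$: the discussion preceding the statement already records that $\phi$ is Type I, lies in $A_n$, and is not in $A^{^{\text{odd}}}_{\frac{n}{2}} \times A^{^{\text{even}}}_{\frac{n}{2}}$ in this case.

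Finally I would assemble the pieces: with $g$ the chosen witness ($\tau^2$ or $\phi$), the subgroup $\left\langle A^{^{\text{odd}}}_{\frac{n}{2}} \times A^{^{\text{even}}}_{\frac{n}{2}},\, g \right\rangle$ lies in $OT_{n,k}$, lies in $G \cap A_n$ (all generators do), and strictly contains $A^{^{\text{odd}}}_{\frac{n}{2}} \times A^{^{\text{even}}}_{\frac{n}{2}}$; by the index-$2$ observation it equals $G \cap A_n$, giving $S^{^{\text{odd}}}_{\frac{n}{2}} \times S^{^{\text{even}}}_{\frac{n}{2}} \cap A_n \leq OT_{n,k}$. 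I do not anticipate a genuine obstacle here, since Proposition~\ref{weirdnormal}(a) does the heavy lifting; the only delicate point is the sign bookkeeping certifying that the witness is \emph{both} components odd, and this is exactly what forces the hypotheses to split into the $n \equiv 0 \md$ and $k \equiv 5~(\operatorname{mod}~8)$ cases, each furnishing such an element by a different mechanism.
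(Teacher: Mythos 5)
Your proposal is correct and follows essentially the same route as the paper's own proof: both rest on the observation that $A^{^{\text{odd}}}_{\frac{n}{2}} \times A^{^{\text{even}}}_{\frac{n}{2}}$ (already in $OT_{n,k}$ by Proposition~\ref{weirdnormal}) has index $2$ in $S^{^{\text{odd}}}_{\frac{n}{2}} \times S^{^{\text{even}}}_{\frac{n}{2}} \cap A_n$, so a single element with both components odd suffices, and both use the same witnesses $\tau^2$ (when $n \equiv 0 \md$) and $\phi$ (when $k \equiv 5~(\operatorname{mod}~8)$). Your only addition is spelling out the sign-homomorphism bookkeeping that the paper compresses into ``makes up exactly half'' plus Lagrange's Theorem.
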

\begin{proof} Observe that $A^{^{\text{odd}}}_{\frac{n}{2}} \times A^{^{\text{even}}}_{\frac{n}{2}}$ makes up exactly half of $S^{^{\text{odd}}}_{\frac{n}{2}} \times S^{^{\text{even}}}_{\frac{n}{2}} \cap A_n$ and that we already know $A^{^{\text{odd}}}_{\frac{n}{2}} \times A^{^{\text{even}}}_{\frac{n}{2}} \leq OT_{n,k}$ by Proposition~\ref{weirdnormal}.  Thus, by Lagrange's Theorem, it suffices to show that we can generate one element in $S^{^{\text{odd}}}_{\frac{n}{2}} \times S^{^{\text{even}}}_{\frac{n}{2}} \cap A_n$ where both parts are odd permutations.  Observe that when $n \equiv 0 \md$, 
$\tau^2$ is a permutation in $S^{^{\text{odd}}}_{\frac{n}{2}} \times S^{^{\text{even}}}_{\frac{n}{2}} \cap A_n$ where both parts are odd permutations.  Similarly, when $k\equiv 5 ~(\operatorname{mod}~8)$, $\phi$ is such a permutation as well.
\end{proof}

\begin{prop}\label{ntwokone} If $n \equiv 2 \md$, $k\equiv 1 ~(\operatorname{mod}~8)$, and $\alpha \in OT_{n,k}$ is a Type I permutation, then $\alpha \in A^{^{\text{odd}}}_{\frac{n}{2}} \times A^{^{\text{even}}}_{\frac{n}{2}}$.
\end{prop}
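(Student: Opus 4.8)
The plan is to prove the statement by showing that both the \emph{odd sign} and the \emph{even sign} are trivial on every Type I element of $OT_{n,k}$. For a Type I permutation $\beta$, corresponding to $(\beta_{\text{o}},\beta_{\text{e}}) \in S^{^{\text{odd}}}_{\frac{n}{2}} \times S^{^{\text{even}}}_{\frac{n}{2}}$, write $\operatorname{sgn}_{\text{odd}}(\beta) = \operatorname{sgn}(\beta_{\text{o}})$ and $\operatorname{sgn}_{\text{even}}(\beta) = \operatorname{sgn}(\beta_{\text{e}})$; the desired conclusion $\alpha \in A^{^{\text{odd}}}_{\frac{n}{2}} \times A^{^{\text{even}}}_{\frac{n}{2}}$ is exactly the statement $\operatorname{sgn}_{\text{odd}}(\alpha)=\operatorname{sgn}_{\text{even}}(\alpha)=+1$. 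The Type I elements of $OT_{n,k}$ form a subgroup $H$, namely the kernel of the Type I/II indicator $\epsilon\colon OT_{n,k}\to\Z_2$, which is onto because $\tau$ is Type II. Since $\operatorname{sgn}_{\text{odd}}$ is a group homomorphism on the group of Type I permutations, its restriction to $H$ is a homomorphism, and it suffices to check it is trivial on a generating set of $H$.

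First I would produce such a generating set: a Reidemeister--Schreier computation for $\epsilon$ with transversal $\{(1),\tau\}$ (or direct rewriting, using $\tau^{j}\phi\tau^{-j} = \tau^{2}(\tau^{j-2}\phi\tau^{-(j-2)})\tau^{-2}$) shows $H = \langle \tau^2,\ \phi,\ \tau\phi\tau^{-1}\rangle$. Then I would evaluate $\operatorname{sgn}_{\text{odd}}$ on each of the three generators. The odd part of $\tau^2 = (1~3~\dots~n-1)(2~4~\dots~n)$ is the $\tfrac{n}{2}$-cycle $(1~3~\dots~n-1)$, whose sign is $(-1)^{\frac{n}{2}-1}$; this is where $n \equiv 2 \md$ enters, since then $\tfrac{n}{2}$ is odd and $\operatorname{sgn}_{\text{odd}}(\tau^2)=+1$. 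For $\phi$, writing $k=8t+1$, its restriction to the odd numbers in $\{1,\dots,k\}$ is the reversal of the $(k+1)/2 = 4t+1$ odd values, a product of $\lfloor(4t+1)/2\rfloor = 2t$ transpositions, so $\operatorname{sgn}_{\text{odd}}(\phi)=+1$; this is exactly where $k\equiv 1~(\operatorname{mod}~8)$ (rather than $5$) is needed. Finally, since conjugation by $\tau$ sends odds to evens and vice versa, it interchanges the two blocks, so $\operatorname{sgn}_{\text{odd}}(\tau\phi\tau^{-1}) = \operatorname{sgn}_{\text{even}}(\phi)$, and the even restriction of $\phi$ is the reversal of the $(k-1)/2 = 4t$ even values, with $2t$ transpositions, again giving $+1$.

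With all three generators in the kernel of $\operatorname{sgn}_{\text{odd}}$, every element of $H$ satisfies $\operatorname{sgn}_{\text{odd}}=+1$. To upgrade this to $\operatorname{sgn}_{\text{even}}=+1$, I would use that $H$ is normal in $OT_{n,k}$ (being $\ker\epsilon$): for $\alpha \in H$ we have $\tau\alpha\tau^{-1}\in H$, hence $+1 = \operatorname{sgn}_{\text{odd}}(\tau\alpha\tau^{-1}) = \operatorname{sgn}_{\text{even}}(\alpha)$, the last equality again because $\tau$ swaps the odd and even blocks. Therefore any Type I $\alpha \in OT_{n,k}$ has both parts even, i.e.\ $\alpha \in A^{^{\text{odd}}}_{\frac{n}{2}} \times A^{^{\text{even}}}_{\frac{n}{2}}$, as claimed.

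I expect the main obstacle to be bookkeeping rather than anything conceptual: pinning down the generating set of $H$ cleanly (so that the reader is convinced $\tau\phi\tau^{-1}$ genuinely must be included) and counting the transpositions in the odd and even restrictions of $\phi$ correctly, since it is precisely the residue of $k$ modulo $8$ that decides whether these counts are even. A conceptual alternative that sidesteps the generating set is to observe that the block-sign map $S^{^{\text{odd}}}_{\frac{n}{2}} \times S^{^{\text{even}}}_{\frac{n}{2}} \to \Z_2 \times \Z_2$ extends, via the block-swap induced by $\tau$, to a homomorphism $PS_n \to \Z_2 \wr \Z_2$; under it $\phi$ maps to the identity (both block-signs are $+1$) while $\tau^2$ maps to the identity (using $n\equiv 2 \md$), so the image of $\tau$ has the form $((a,a),s)$ with $s$ the swap. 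Then the image of $OT_{n,k} = \langle\tau,\phi\rangle$ is the order-two group generated by $((a,a),s)$, whose only element with trivial swap-component is the identity; every Type I element of $OT_{n,k}$ has trivial swap-component, so it maps to the identity, forcing both block-signs to be $+1$ simultaneously.
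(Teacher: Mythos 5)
Your proposal is correct, but it takes a genuinely different route from the paper. The paper proves the proposition by induction on the length of a word in $\phi$ and $\tau$ representing $\alpha$: in a minimal presentation it peels off the rightmost $\phi$ by conjugating by a power of $\tau$ and multiplying by $\phi$, producing a shorter Type I word, and closes the induction using that $\phi, \tau^2 \in A^{^{\text{odd}}}_{\frac{n}{2}} \times A^{^{\text{even}}}_{\frac{n}{2}}$ and that conjugation by $\tau$ preserves this subgroup (it merely swaps the two blocks). You instead observe that the Type I elements form the index-two kernel $H$ of the Type I/II indicator, compute Schreier generators $\tau^2$, $\phi$, $\tau\phi\tau^{-1}$ of $H$ from the transversal $\{(1),\tau\}$, and check that the odd block-sign homomorphism kills each generator --- your sign counts are right: the odd part of $\tau^2$ is an $\frac{n}{2}$-cycle with $\frac{n}{2}$ odd exactly when $n \equiv 2 \md$, and for $k = 8t+1$ the odd and even restrictions of $\phi$ are reversals of $4t+1$ and $4t$ values, each a product of $2t$ transpositions --- after which the normality trick $\operatorname{sgn}_{\text{odd}}(\tau\alpha\tau^{-1}) = \operatorname{sgn}_{\text{even}}(\alpha)$ upgrades the conclusion to both blocks. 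In effect, your Schreier computation does once and for all what the paper's induction does one letter at a time; the paper's argument is more elementary and self-contained (no appeal to Schreier's lemma), while yours is more modular and makes completely transparent where each hypothesis enters ($n \equiv 2 \md$ only in $\operatorname{sgn}_{\text{odd}}(\tau^2)$, $k \equiv 1~(\operatorname{mod}~8)$ only in the transposition count for $\phi$), which also exposes immediately why the $k \equiv 5~(\operatorname{mod}~8)$ case of Proposition~\ref{typeIeven} behaves differently. Your wreath-product variant $PS_n \cong S_{\frac{n}{2}} \wr \Z_2 \to \Z_2 \wr \Z_2$ is a further clean packaging of the same idea: it even sidesteps the generating-set computation entirely, since the image of $\langle \tau, \phi\rangle$ is the order-two subgroup generated by the image $((a,a),s)$ of $\tau$, whose only swap-trivial element is the identity.
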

\begin{proof} First, observe that if $\alpha$ is Type I, then $\tau \alpha \tau^{-1}$ will also be Type I.  Moreover,  we know that $\alpha$ can be written as a product of $\phi$'s and $\tau$'s.  We now prove the claim by induction on the number of terms in the product.  Since $\tau$ is Type II, the base case is when $\alpha =\phi$ which is in $A^{^{\text{odd}}}_{\frac{n}{2}} \times A^{^{\text{even}}}_{\frac{n}{2}}$.

Now, suppose that for any $\alpha=\alpha_1 \alpha_2 \cdots \alpha_t$ with $t \geq 1$ and each $\alpha_i = \phi$ or $\tau$ we have $\alpha \in A^{^{\text{odd}}}_{\frac{n}{2}} \times A^{^{\text{even}}}_{\frac{n}{2}}$.  Consider the case when $\alpha$ is Type I and has minimal presentation $\alpha_1 \cdots \alpha_{t+1}$.  If $\alpha_i = \tau$ for all $i$, then it follows that $t+1$ is even and that $\alpha \in A^{^{\text{odd}}}_{\frac{n}{2}} \times A^{^{\text{even}}}_{\frac{n}{2}}$ since $\tau^2 \in A^{^{\text{odd}}}_{\frac{n}{2}} \times A^{^{\text{even}}}_{\frac{n}{2}}$.  If not, then let $s$ be minimal such that $\alpha_{t+1-s}=\phi$.  Hence, $\alpha=\alpha_1\dots\alpha_{t-s} \phi \tau^{\pm s}$.  As these two cases are similar, we'll deal only with the case when the exponent on $\tau$ is positive for the sake of clarity.  We may conjugate $s$ times by $\tau$, to get $\phi$ to be the rightmost element, and then multiply by $\phi$, to obtain the Type I permutation
$$\gamma = \tau^{s} \alpha \tau^{-s} \phi = \tau^{s} \alpha_1 \cdots \alpha_{t-s}\phi \phi = \tau^{s} \alpha_1 \cdots \alpha_{t-s}.$$
This product has only $t$ terms, thus it follows by induction that $\gamma \in A^{^{\text{odd}}}_{\frac{n}{2}} \times A^{^{\text{even}}}_{\frac{n}{2}}$.  Now, as $\alpha = \tau^{-s} \left(\gamma \phi\right)\tau^{s}$ it follows that $\alpha \in A^{^{\text{odd}}}_{\frac{n}{2}} \times A^{^{\text{even}}}_{\frac{n}{2}}$, completing the induction.
\end{proof}

\section{Describing $OT_{n,k}$ and fixing puzzles.}
We now describe the oval track group $OT_{n,k}$ in each case and follow up with an interpretation of each group as a set of instructions to fix any broken puzzles.  Since the oval track group elements exactly correspond to the \emph{solvable} puzzle positions, we can fix any broken puzzle by replacing the tiles using appropriate permutations.

\begin{main}If $n \geq 4$ and $1 < k < n-1$, then
\begin{itemize}
\item[(1)] $OT_{n,k} \cong S_n$ if $n$ is even and $k$ is even OR if $n$ is odd and $k \equiv 2,3 \md$.
\item[(2)] $OT_{n,k} \cong A_n$ if $n$ is odd and $k \equiv 0,1 \md$.
\item[(3)] $OT_{n,k} \cong PS_n$ if $n$ is even and $k \equiv 3 \md$.
\item[(4)] $OT_{n,k} \cong \{\alpha, \tau\alpha \mid \alpha \in S^{^{\text{odd}}}_{\frac{n}{2}} \times S^{^{\text{even}}}_{\frac{n}{2}} \cap A_n\}$ if $n$ is even and $k \equiv 5 ~(\operatorname{mod}~8)$\\{\color{white}.}~~~ OR if $n \equiv 0 \md$ and $k \equiv 1 ~(\operatorname{mod}~8)$.
\item[(5)] $OT_{n,k} \cong \{\alpha, \tau\alpha \mid \alpha \in A^{^{\text{odd}}}_{\frac{n}{2}} \times A^{^{\text{even}}}_{\frac{n}{2}}\}$ if $n \equiv 2 \md$ and $k \equiv 1 ~(\operatorname{mod}~8)$.
\end{itemize}
\end{main}
\begin{proof} For (1) and (2), Lemma~\ref{nicecases} tells us that $A_n \leq OT_{n,k}$.  Recall that when $n$ is even, $\tau$ is an odd permutation and when $k \equiv 2,3 \md$, then $\phi$ is an odd permutation.  Hence, in each case of (1) we can generate all of $S_n$ by Lagrange's Theorem.  Meanwhile, for the cases in (2), both generators are contained in $A_n$ and thus it follows that $OT_{n,k} \cong A_n$.

For (3), (4), and (5) recall that we have already seen that $A^{^{\text{odd}}}_{\frac{n}{2}} \times A^{^{\text{even}}}_{\frac{n}{2}} \leq OT_{n,k} \leq PS_n$.  Moreover, we know that the Type II portion of $OT_{n,k}$ is exactly the coset of the Type I portion generated by $\tau$.  The results now follow by Propositions~\ref{weirdnormal},~\ref{typeIeven}, and ~\ref{ntwokone} respectively.
\end{proof}

Recall that the set of all potential puzzle arrangements exactly corresponds to the permutations in $S_n$.  Thus, for cases in collection (1), it does not matter how the tiles are returned when fixing the puzzle.  If instead, you are in collection (2), then you must be more careful.  One way to proceed here is to build cycles.  Start by placing any tile anywhere in the puzzle.  Then whatever location you filled, pick the corresponding tile up next (i.e.\ the tile that would inhabit that position in the solved puzzle) and place it anywhere.  Continue this procedure until you fill the location of a tile you've already picked up (which will necessarily be the location corresponding to the first tile you picked up) -- this creates one cycle.  Then pick up another tile you haven't placed yet and continue the process.

Since we are building a permutation out of cycles, we can determine whether the puzzle will be solvable by seeing if we can decompose those cycles into an even number of transpositions.  Recall that a cycle with $\ell$ tiles can be decomposed in terms of $\ell-1$ transpositions.  Thus, when decomposing all of the disjoint cycles created (which together cover all $n$ tiles) we will arrive at $n-c$ transpositions, where $c$ is the number of cycles.  Since $n$ is odd, this number will be even (and hence the puzzle will be fixed) if and only if $c$ is odd.

For collection (3), you can replace the tiles however you would like as long as tiles with the same parity never end up next to each other.  Another way to think about this is to color the odd positions blue and the even positions red.  Then you can replace the tiles in any way that places the odd tiles in one color and the even tiles in the other.

For collections (4) and (5), we must try to combine the instructions for (2) and (3).  We can create cycles again, but we need to be more careful in their construction.  The difference is that we also need to separate tiles of different parity.  One way to accomplish this is to color odd tiles blue and even tiles red and then renumber the tiles of each color from 1 to $\frac{n}{2}$ (in increasing order).  In addition, we must mentally separate the locations by parity (coloring them blue and red again) and then imagine renumbering each half of the track from 1 to $\frac{n}{2}$ as well.  We then assign a half of the track to each pile and create cycles as before, but from one pile at a time.  This gives us an alternating blue and red track and corresponding piles of blue and red tiles numbered from 1 to $\frac{n}{2}$ to work with.  If we swap colors (putting red tiles in blue locations and vice versa), then this corresponds to creating a Type II permutation.  Since Type II permutations have the form $\tau \alpha$ for some Type I $\alpha$, we account for this by shifting the position labels so that the first odd (blue) position is in position three.

Using that method of construction, we may satisfy the conditions of collection (4), by having an odd number of cycles in total.  One possible example in this collection would be a puzzle with $n=20$ and $k=5$.  Using blue and red tiles and positions each renumbered 1 through 10 instead of $1, 3, \dots, 19$ we then create cycles out of each color.  If we choose to place blue tiles in red positions and vice versa, then the blue positions labeled 1 through 10 correspond to the old positions 3, 5, $\dots$, 19, 1, respectively.  In blue, we could make the permutation $(5~6~7~10)(2~4~1~3)(8)(9)$ (which is four cycles), and in red we could make $(3~4)(2)(1~5)(6)(7~10~9)(8)$ (six cycles), for a total of ten cycles.  In translating back to evens and odds, recall that we also swapped all evens with all odds, thus obtaining $(1~6~9~12~13~20~19~10~3~8~7~2~11~14)(4~5)(15~16~17~18)$ which is an odd, Type II permutation (whose Type I part is in $A_n$) as desired.  See Figure~\ref{fig:collection4} for the construction and corresponding puzzle.

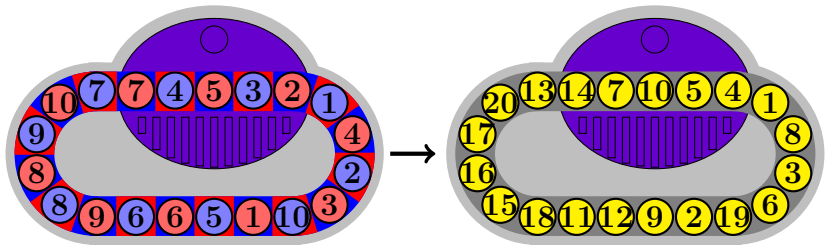
\begin{figure}[h!]
\captionsetup[subfigure]{labelformat=simple}
\centering
\begin{tikzpicture}[scale=0.4]
    \draw[thick,fill,lightgray] (-3.25,0) circle (3);
    \draw[thick,fill,lightgray] (3.25,0) circle (3);
    \draw[thick,fill,lightgray] (-3.5,-3) -- (3.5,-3) -- (3.5,3) -- (-3.5,3) -- (-3.5,-3);
    \draw[thick,fill,lightgray] (0.65,2) ellipse (3.6 and 2.9);
    \draw[fill=plum] (0.65,2) ellipse (3.2 and 2.5);
    \begin{scope}[shift={(0.65,0)},xscale=1.2,yscale=0.9]
    \draw (-0.1,1.35) -- (0.1,1.35) -- (0.1,-0.5) -- (-0.1,-0.5) -- (-0.1,1.35);
    \draw (-0.5,1.35) -- (-0.3,1.35) -- (-0.3,-0.45) -- (-0.5,-0.45) -- (-0.5,1.35);
    \draw (-0.9,1.35) -- (-0.7,1.35) -- (-0.7,-0.35) -- (-0.9,-0.35) -- (-0.9,1.35);
    \draw (-1.3,1.35) -- (-1.1,1.35) -- (-1.1,-0.15) -- (-1.3,-0.15) -- (-1.3,1.35);
    \draw (-1.7,1.35) -- (-1.5,1.35) -- (-1.5,0.15) -- (-1.7,0.15) -- (-1.7,1.35);
    \draw (-2.1,1.35) -- (-1.9,1.35) -- (-1.9,0.75) -- (-2.1,0.75) -- (-2.1,1.35);
    \draw (0.5,1.35) -- (0.3,1.35) -- (0.3,-0.45) -- (0.5,-0.45) -- (0.5,1.35);
    \draw (0.9,1.35) -- (0.7,1.35) -- (0.7,-0.35) -- (0.9,-0.35) -- (0.9,1.35);
    \draw (1.3,1.35) -- (1.1,1.35) -- (1.1,-0.15) -- (1.3,-0.15) -- (1.3,1.35);
    \draw (1.7,1.35) -- (1.5,1.35) -- (1.5,0.15) -- (1.7,0.15) -- (1.7,1.35);
    \draw (2.1,1.35) -- (1.9,1.35) -- (1.9,0.75) -- (2.1,0.75) -- (2.1,1.35);
    \end{scope}
    \draw (0.65,3.8) circle(.45);
    \draw[thick,fill,blue] (-3.25,2.7) arc (90:270:2.7) -- (-3.25,-1.45) arc (270:90:1.45) -- (-3.25,2.7);
    \draw[thick,fill,blue] (3.25,2.7) arc (90:-90:2.7) -- (3.25,-1.45) arc (-90:90:1.45) -- (3.25,2.7);
    \draw[thick,fill,blue] (-3.5,2.7) -- (3.5,2.7) -- (3.5,1.45) -- (-3.5,1.45) -- (-3.5,2.7);
    \draw[thick,fill,blue] (-3.5,-2.7) -- (3.5,-2.7) -- (3.5,-1.45) -- (-3.5,-1.45) -- (-3.5,-2.7);
    \draw[thick,fill,red] (-2.62,2.7) -- (-3.25,2.7) arc(90:107:2.7) -- ++(107:-1.25) arc(107:90:1.45) -- (-2.62,1.45) -- cycle;
    \draw[thick,fill,red] (-3.25,0) +(144:1.45) arc(144:180:1.45) -- +(180:1.25) arc (180:144:2.7);
    \draw[thick,fill,red] (-3.25,0) +(216:1.45) arc(216:251:1.45) -- +(251:1.25) arc (251:216:2.7);
    \draw[thick,fill,red] (-2.57,-2.7) -- (-1.33,-2.7) -- (-1.33,-1.45) -- (-2.57,-1.45) -- cycle;
    \draw[thick,fill,red] (-1.27,2.7) -- (-0.03,2.7) -- (-0.03,1.45) -- (-1.27,1.45) -- cycle;
    \draw[thick,fill,red] (0.03,-2.7) -- (0.03,-1.45) -- (1.27,-1.45) -- (1.27, -2.7) -- cycle;
    \draw[thick,fill,red] (1.32,1.45) -- (1.32, 2.7) -- (2.57, 2.7) -- (2.57,1.45) -- cycle;
    \draw[thick,fill,red] (2.62, -2.7) -- (2.62,-1.45) -- (3.25,-1.45) arc (-90:-73:1.45) -- +(-73:1.25) arc (-73:-90:2.7) -- cycle;
    \draw[thick,fill,red] (3.25,0) +(-36:1.45) arc(-36:0:1.45) -- +(0:1.25) arc (0:-36:2.7);
    \draw[thick,fill,red] (3.25,0) +(36:1.45) arc(36:71:1.45) -- +(71:1.25) arc (71:36:2.7);
    \draw[thick,fill=red!60] (-1.95,2.1) circle (0.6) node {\large \bf 7};
    \draw[thick,fill=blue!50] (-0.65,2.1) circle (0.6) node {\large \bf 4};
    \draw[thick,fill=red!60] (0.65,2.1) circle (0.6) node {\large \bf 5};
    \draw[thick,fill=blue!50] (1.95,2.1) circle (0.6) node {\large \bf 3};
    \draw[thick,fill=red!60] (3.25,2.1) circle (0.6) node {\large \bf 2};
    \draw[thick,fill=blue!50] (3.25,0) ++(54:2.1) circle (0.6) node {\large \bf 1};
    \draw[thick,fill=red!60] (3.25,0) ++(18:2.1) circle (0.6) node {\large \bf 4};
    \draw[thick,fill=blue!50] (3.25,0) ++(-18:2.1) circle (0.6) node {\large \bf 2};
    \draw[thick,fill=red!60] (3.25,0) ++(-54:2.1) circle (0.6) node {\large \bf 3};
    \draw[thick,fill=blue!50] (3.25,-2.1) circle (0.6) node {\large \bf 10};
    \draw[thick,fill=red!60] (1.95,-2.1) circle (0.6) node {\large \bf 1};
    \draw[thick,fill=blue!50] (0.65,-2.1) circle (0.6) node {\large \bf 5};
    \draw[thick,fill=red!60] (-0.65,-2.1) circle (0.6) node {\large \bf 6};
    \draw[thick,fill=blue!50] (-1.95,-2.1) circle (0.6) node {\large \bf 6};
    \draw[thick,fill=red!60] (-3.25,-2.1) circle (0.6) node {\large \bf 9};
    \draw[thick,fill=blue!50] (-3.25,0) ++(234:2.1) circle (0.6) node {\large \bf 8};
    \draw[thick,fill=red!60] (-3.25,0) ++(198:2.1) circle (0.6) node {\large \bf 8};
    \draw[thick,fill=blue!50] (-3.25,0) ++(162:2.1) circle (0.6) node {\large \bf 9};
    \draw[thick,fill=red!60] (-3.25,0) ++(126:2.1) circle (0.6) node {\large \bf 10};
    \draw[thick,fill=blue!50] (-3.25,2.1) circle (0.6) node {\large \bf 7};
    \draw[ultra thick,->] (6.5,0) -- (8,0);
\end{tikzpicture}
\begin{tikzpicture}[scale=0.4]
    \draw[thick,fill,lightgray] (-3.25,0) circle (3);
    \draw[thick,fill,lightgray] (3.25,0) circle (3);
    \draw[thick,fill,lightgray] (-3.5,-3) -- (3.5,-3) -- (3.5,3) -- (-3.5,3) -- (-3.5,-3);
    \draw[thick,fill,lightgray] (0.65,2) ellipse (3.6 and 2.9);
    \draw[fill=plum] (0.65,2) ellipse (3.2 and 2.5);
    \begin{scope}[shift={(0.65,0)},xscale=1.2,yscale=0.9]
    \draw (-0.1,1.35) -- (0.1,1.35) -- (0.1,-0.5) -- (-0.1,-0.5) -- (-0.1,1.35);
    \draw (-0.5,1.35) -- (-0.3,1.35) -- (-0.3,-0.45) -- (-0.5,-0.45) -- (-0.5,1.35);
    \draw (-0.9,1.35) -- (-0.7,1.35) -- (-0.7,-0.35) -- (-0.9,-0.35) -- (-0.9,1.35);
    \draw (-1.3,1.35) -- (-1.1,1.35) -- (-1.1,-0.15) -- (-1.3,-0.15) -- (-1.3,1.35);
    \draw (-1.7,1.35) -- (-1.5,1.35) -- (-1.5,0.15) -- (-1.7,0.15) -- (-1.7,1.35);
    \draw (-2.1,1.35) -- (-1.9,1.35) -- (-1.9,0.75) -- (-2.1,0.75) -- (-2.1,1.35);
    \draw (0.5,1.35) -- (0.3,1.35) -- (0.3,-0.45) -- (0.5,-0.45) -- (0.5,1.35);
    \draw (0.9,1.35) -- (0.7,1.35) -- (0.7,-0.35) -- (0.9,-0.35) -- (0.9,1.35);
    \draw (1.3,1.35) -- (1.1,1.35) -- (1.1,-0.15) -- (1.3,-0.15) -- (1.3,1.35);
    \draw (1.7,1.35) -- (1.5,1.35) -- (1.5,0.15) -- (1.7,0.15) -- (1.7,1.35);
    \draw (2.1,1.35) -- (1.9,1.35) -- (1.9,0.75) -- (2.1,0.75) -- (2.1,1.35);
    \end{scope}
    \draw (0.65,3.8) circle(.45);
    \draw[thick,fill,gray] (-3.25,2.7) arc (90:270:2.7) -- (-3.25,-1.45) arc (270:90:1.45) -- (-3.25,2.7);
    \draw[thick,fill,gray] (3.25,2.7) arc (90:-90:2.7) -- (3.25,-1.45) arc (-90:90:1.45) -- (3.25,2.7);
    \draw[thick,fill,gray] (-3.5,2.7) -- (3.5,2.7) -- (3.5,1.45) -- (-3.5,1.45) -- (-3.5,2.7);
    \draw[thick,fill,gray] (-3.5,-2.7) -- (3.5,-2.7) -- (3.5,-1.45) -- (-3.5,-1.45) -- (-3.5,-2.7);
    \draw[thick,fill=yellow] (-1.95,2.1) circle (0.6) node {\large \bf 14};
    \draw[thick,fill=yellow] (-0.65,2.1) circle (0.6) node {\large \bf 7};
    \draw[thick,fill=yellow] (0.65,2.1) circle (0.6) node {\large \bf 10};
    \draw[thick,fill=yellow] (1.95,2.1) circle (0.6) node {\large \bf 5};
    \draw[thick,fill=yellow] (3.25,2.1) circle (0.6) node {\large \bf 4};
    \draw[thick,fill=yellow] (3.25,0) ++(54:2.1) circle (0.6) node {\large \bf 1};
    \draw[thick,fill=yellow] (3.25,0) ++(18:2.1) circle (0.6) node {\large \bf 8};
    \draw[thick,fill=yellow] (3.25,0) ++(-18:2.1) circle (0.6) node {\large \bf 3};
    \draw[thick,fill=yellow] (3.25,0) ++(-54:2.1) circle (0.6) node {\large \bf 6};
    \draw[thick,fill=yellow] (3.25,-2.1) circle (0.6) node {\large \bf 19};
    \draw[thick,fill=yellow] (1.95,-2.1) circle (0.6) node {\large \bf 2};
    \draw[thick,fill=yellow] (0.65,-2.1) circle (0.6) node {\large \bf 9};
    \draw[thick,fill=yellow] (-0.65,-2.1) circle (0.6) node {\large \bf 12};
    \draw[thick,fill=yellow] (-1.95,-2.1) circle (0.6) node {\large \bf 11};
    \draw[thick,fill=yellow] (-3.25,-2.1) circle (0.6) node {\large \bf 18};
    \draw[thick,fill=yellow] (-3.25,0) ++(234:2.1) circle (0.6) node {\large \bf 15};
    \draw[thick,fill=yellow] (-3.25,0) ++(198:2.1) circle (0.6) node {\large \bf 16};
    \draw[thick,fill=yellow] (-3.25,0) ++(162:2.1) circle (0.6) node {\large \bf 17};
    \draw[thick,fill=yellow] (-3.25,0) ++(126:2.1) circle (0.6) node {\large \bf 20};
    \draw[thick,fill=yellow] (-3.25,2.1) circle (0.6) node {\large \bf 13};
\end{tikzpicture}
\caption{A possible fix for a puzzle with $n=20$ and $k=5$.}
\label{fig:collection4}
\end{figure}

To compare, for collection (5) we actually need an odd number of cycles for each individual pile/color.  The smallest such puzzle is one with $n=14$ and $k=9$ where we could make odds red and evens blue (thus building a Type I permutation since we did not swap the colors).  The red permutation $(1~4~2~5)(3~7)(6)$ together with the blue permutation $(1~3~2~4~7~6~5)$ corresponds to the solvable puzzle below.
$$\left(\circled{9}~\circled{10}~\circled{7}~\circled{6}~\circled{13}~\circled{2}~\circled{1}~\circled{4}~\circled{3}\right)\circled{12}~ \circled{11}~\circled{14}~\circled{5}~\circled{8}$$


\end{document}